\newtheorem{theorem}{Theorem}[section]
\newtheorem{proposition}[theorem]{Proposition}
\newtheorem{corollary}[theorem]{Corollary}
\newtheorem{lemma}[theorem]{Lemma}
\newtheorem{problem}[theorem]{Problem}
\newtheorem{rmk}[theorem]{Remark}
\newtheorem*{theorem*}{Theorem}
\newtheorem*{corollary*}{Corollary}
\newtheorem{definition}[theorem]{Definition}
\newcommand{\pp}{\mathfrak{p}}
\renewcommand{\O}{\mathcal {O}}
\newcommand{\F}{\mathbb{F}}
\newcommand{\MM}{\mathcal{M}}
\newcommand{\LL}{\mathcal{L}}
\newcommand{\ord}{\operatorname{ord}}
\DeclareMathOperator{\ass}{as}
\DeclareMathOperator{\dib}{\operatorname{|}}
\DeclareMathOperator{\Div}{\operatorname{div}}
\newcommand{\SQ}{\operatorname{SQ}}
\newcommand{\Sq}{\operatorname{Sq}}
\newcommand{\qq}{\mathfrak{q}}
\title{Existential decidability for addition and divisibility in holomorphy subrings of global fields}
\author{Carlos  Mart\'inez-Ranero, Javier Utreras, Xavier Vidaux\footnote{Universidad de Concepci\'on, Chile (all three authors).}}
\begin{document}
\maketitle

\begin{abstract}
We investigate the problem of deciding whether a system of linear equations, together with divisibility conditions on the variables, has a solution over holomorphy subrings of global fields. We obtain decidability results when we allow poles at a cofinite set of places, and undecidability results when at a finite set of places. 
%
%
%
\footnote{ The  first named author was partially supported by  Proyecto VRID-Investigación  No. 220.015.024-INV. The second author has been supported by the Fondecyt postdoctorado project 3160301 from Conicyt, Chile. The third author has been partially supported by the Fondecyt project 1170315 from Conicyt, Chile.}
\end{abstract}

Keywords: Hilbert's tenth problem, global fields, decidability, fragments

MSC 2010: Primary: 11U05, Secondary: 03B25

\tableofcontents

\section{Introduction and main results}

In the late seventies, Lipshitz \cite{L77}, and in parallel Bel’tyukov \cite{B76}, showed that there is an algorithm for the following decision problem:\\
 Given any sentence of the form 
\begin{equation} \label{eqn:intro}
\exists x_1,\dots, \exists x_n
\bigwedge_{i=1}^k f_i(x_1,\dots,x_n)=0\wedge
\bigwedge_{i=1}^m g_i(x_1,\dots,x_n) \ |\  h_i(x_1,\dots,x_n),
\end{equation}

\noindent where the $f_i$, $g_i$ and $h_i$ are linear polynomials with integer coefficients, decide whether or not  it has a solution over $\mathbb{Z}$.

In contrast, Lipshitz \cite{L78b} showed that the analogous problem for the ring of integers $\mathcal{O}_K$ of any number field other than quadratic imaginary, is as complicated as Hilbert Tenth Problem for $\mathcal{O}_K$ (i.e. considering polynomials of arbitrary degree in the input). The latter is widely believed to be unsolvable (for unconditional results, see \cite{Den75,DL78,P88,S89,V89,SS89} and more recently \cite{GFP20}; conditional results have been obtained for every $K$ --- \cite{Po02,CPhZ05,Sh08,MR10,MR18}).

Complementing the above result of Lipshitz and Beltyukov, it was recently shown by Cerda-Romero and Mart\'inez-Ranero \cite{CM17} that if $S$ is a finite and non-empty set of primes, then there is no algorithm to decide whether Equation \eqref{eqn:intro} has solutions in $\mathbb Z[S^{-1}]$. There are also similar results for rings of rational functions over finite fields see  \cite{P85},\cite{P88} and \cite{CM}. 

In this paper, we will extend the above results to holomorphy subrings of global fields. In order to make this precise we need to introduce a few definitions. 

Let $K$ be a global field with places at infinity $\infty_1,\dots,\infty_s$, and let $\O_{K}$ be the ring  of elements integral at all places other than $ \infty_1,\dots,\infty_s$. In the case where $K$ is a number field, we will be using the term places at infinity in its conventional meaning to denote the archimedean valuations, and in the case of function fields (i.e.: $K$ is a finite separable extension of $F_p(t)$ for some rational prime $p$), we will let $\infty_1,\dots,\infty_s$ denote the primes of $K$ lying above $\infty$. In the latter case, the choice of places at infinity is somewhat arbitrary but it is closely analogous to the number field case.  For any set $S$ of primes of $K$ containing all infinite primes we define $\O_{K,S}$ to be the holomorphy ring $$\O_{K,S}:=\{\, x\in K\colon \ord_\pp(x)\geq 0, \emph{for all}\ \pp\notin S\,\}.$$

\bigskip
This paper is motivated by the following general problem.  \begin{problem}\label{prob}
	For which holomorphy rings $\O_{K,S}$ does there exist an algorithm to solve the following decision problem: for any given system of linear equations with coefficients in $\mathbb Z$ (or in $\mathbb{F}_p$ or $\mathbb{F}_p[t]$ for the function field case) together with divisibility conditions on the variables, decide whether or not there exists a solution over $\O_{K,S}$. 

\end{problem}

In other words, in terms of mathematical logic, the problem asks which of the rings $\O_{K,S}$ have a decidable positive existential theory over the languages $\mathcal{L}_{\Div}=\{=,0,1,+,\mid\}$ and $ \mathcal{L}_{\Div,t}=\mathcal{L}_{\Div}\cup \{\cdot t\}$ --- here $x\mid y$ will be interpreted as ``$x$ divides $y$'', and $\cdot t$ as multiplication by $t$. \\

In this paper, we provide an answer to Problem \ref{prob} for two different cases. First we obtain decibility results for the case where $S$ is a cofinite set of places.

\begin{theorem}[Global fields over $\mathcal{L}_{\rm div}$]\label{maindec1}
Let $K$ be a number field or a finite separable extension of $\mathbb{F}_p(t)$, and let $\O_K$ be its ring of integers.  If $S$ is a cofinite set of prime ideals of $\O_K$, then the existential theory of $\O_{K,S}$ is decidable in the language $\mathcal{L}_{\rm div}$.
\end{theorem}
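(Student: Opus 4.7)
The plan is to use the semi-local structure of $R := \O_{K,S}$ to reduce the decision problem to a finite conjunction of decidable local problems at the primes in the finite complement $T$ of $S$, and then to lift local solutions to a global one by weak approximation. Since $T$ is finite by hypothesis, $R$ is a semi-local Dedekind domain---hence a principal ideal domain---whose maximal ideals correspond bijectively to $T$. The crucial initial reduction is that divisibility in $R$ is captured entirely by valuations: $a \mid b$ in $R$ if and only if $\ord_\pp(a) \le \ord_\pp(b)$ for every $\pp \in T$ (with the convention $\ord_\pp(0) := +\infty$). Thus any existential $\mathcal L_{\rm div}$-sentence over $R$ is equivalent to an existential statement whose atomic formulas are $\mathbb Z$-linear (respectively $\mathbb F_p$-linear) equations together with valuation inequalities $\ord_\pp(L_1(\bar x)) \le \ord_\pp(L_2(\bar x))$ on such linear forms, one family for each $\pp \in T$.

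First I would solve the linear subsystem over $K$: if inconsistent, output NO; otherwise parametrize the affine $K$-solution space by free parameters $t_1,\dots,t_k \in K$, so that the integrality conditions $\bar x \in R^n$ and the divisibility-derived inequalities become, for each $\pp \in T$, a finite system of valuation inequalities on linear forms in $\bar t$ with coefficients in $\mathbb Q$ (respectively in $\mathbb F_p(t)$). For each $\pp \in T$ the completion $K_\pp$ is a finite extension of $\mathbb Q_p$ or of $\mathbb F_p((t))$, and the local system is an existential sentence in the language of valued fields over $K_\pp$; its satisfiability is decidable by Ax--Kochen--Ershov (or Macintyre's quantifier elimination for $p$-adic fields) in characteristic zero, and by the analogous existential decidability results for Henselian Laurent-series type local fields in positive characteristic. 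If some local problem is infeasible, output NO; otherwise fix local solutions $\bar t^{(\pp)} \in K_\pp^k$ for each $\pp \in T$.

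To lift to a global $\bar t \in K^k$, I would use weak approximation to approximate each $\bar t^{(\pp)}$ to $\pp$-adic precision strictly greater than every valuation appearing in the system at $\pp$. Sufficient precision guarantees that whenever a linear form $L$ does not vanish at $\bar t^{(\pp)}$ one has $\ord_\pp(L(\bar t)) = \ord_\pp(L(\bar t^{(\pp)}))$, so every closed inequality $\ord_\pp(L_1) \le \ord_\pp(L_2)$ is preserved; the case in which some linear form vanishes at the local solution is handled by a finite case analysis, either imposing the vanishing as an additional linear equation globally or selecting a ``generic'' local solution on which no linear form vanishes. The hard part will be this final combining step: one must establish a mild Hasse principle saying that local solvability at every $\pp \in T$ implies global $K$-solvability for this class of formulas, and one must choose the approximations uniformly at all primes so that every valuation inequality is preserved simultaneously. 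A secondary delicate point is ensuring existential decidability of the local problem in positive characteristic, which is subtler than the classical $p$-adic setting.
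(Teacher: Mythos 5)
Your proposal follows the same strategy as the paper: reduce divisibility to valuation conditions at the finitely many primes $\pp\in T$, solve the linear subsystem over $K$ and parametrize, translate to existential statements over the completions $K_\pp$, decide these via known local decidability results, and lift a tuple of local witnesses to a global one by weak (Artin--Whaples) approximation. Two points are flagged but left genuinely unresolved, and these are exactly where the paper spends its technical effort.

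First, the reduction of a full existential $\LL_{\Div}$-sentence to ``linear equations plus valuation inequalities $\ord_\pp(L_1)\le\ord_\pp(L_2)$'' is incomplete as stated: negated atoms $\neg(L_1\mid L_2)$ translate into a \emph{disjunction} over $T$ of \emph{strict} inequalities $\ord_{\pp_i}(L_1)>\ord_{\pp_i}(L_2)$ (together with the degenerate case $L_1=0\wedge L_2\neq0$), and $L\neq0$ is not among your listed atom types. The paper handles this by introducing the predicates $\neq$, $\|$ (divisibility with nonzero right-hand side) and one $o_i$ per prime in $T$, normalizing to a disjunction of conjunctions that it treats disjunct by disjunct; you would need an equivalent normalization before going local.

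Second, and more seriously, your treatment of the lifting step leaves a real gap. You propose that if a linear form vanishes at a local witness, one should ``either impose the vanishing globally or select a generic local solution.'' Neither option is obviously available: imposing $L=0$ globally changes the problem and may destroy solvability at other primes, while a ``generic'' local witness on which no form vanishes need not exist. The paper sidesteps this by building nonvanishing directly into the translated local formula $\varphi_\pp$ --- the $\|$-atom $l_1\|l_2$ is translated so that $l_2(A\overline y+\overline c)$ is required to be a unit in $K_\pp$, and $l_1\neq0$ similarly --- so that on any local witness the relevant forms are automatically nonzero and hence their valuations are stable under sufficiently fine approximation. This design, rather than an after-the-fact case analysis, is what makes the Artin--Whaples step go through cleanly (Theorem~\ref{thm:main}). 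Finally, your appeal to ``existential decidability results for Henselian Laurent-series type local fields'' in positive characteristic is precisely where the paper invokes the Anscombe--Fehm theorem \cite{AF16} for the language $\LL_{\ord}$ (without a constant for $t$), a nontrivial ingredient worth naming; the stronger version with $\cdot t$ in the language is what forces the conditional hypothesis in Theorem~\ref{maindec2}.
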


\begin{theorem}[Function fields over $\mathcal{L}_{\rm div,t}$]\label{maindec2}
Assume that the existential theory of $\mathbb F_p((t))$ over the language $\mathcal L_{\rm ring,\rm ord}=\{0,1,+,\cdot,=,\cdot t,\ord\}$ is decidable (where $\ord$ is interpreted as the valuation ring, and $\cdot t$ is multiplication by $t$). 

Let $K$ be a finite separable extension of $\mathbb{F}_p(t)$, and let $\O_K$ denote the integral closure of $\mathbb F_p[t]$ in $K$. If $S$ is a cofinite set of prime ideals of $\O_K$, then the existential theory of $\O_{K,S}$ is decidable in the language $\mathcal L_{\rm div, \rm t}$.
\end{theorem}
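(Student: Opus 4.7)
Since $S$ is cofinite, its complement $T=\{\pp_1,\dots,\pp_n\}$ is a finite set of finite primes, $\mathcal{O}_{K,S}=\bigcap_{i=1}^{n} \mathcal{O}_{K,\pp_i}$, and divisibility in $\mathcal{O}_{K,S}$ is characterized locally: for $g\neq 0$ in $\mathcal{O}_{K,S}$, one has $g\mid h$ iff $\ord_{\pp_i}(g)\le \ord_{\pp_i}(h)$ for every $i$. The overall plan is (i) to reduce the given existential $\mathcal L_{\rm div,\rm t}$-sentence over $\mathcal{O}_{K,S}$ to a finite conjunction of local existential problems in the completions $K_{\pp_i}$, and (ii) to reduce each local problem to the assumed decidable existential theory of $\mathbb F_p((t))$ via a basis interpretation.

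\textbf{Parametrization step.} For an existential sentence $\exists \bar x\in\mathcal{O}_{K,S}^m\bigl(\bigwedge_k f_k(\bar x)=0\wedge \bigwedge_j g_j(\bar x)\mid h_j(\bar x)\bigr)$ with $\mathbb F_p[t]$-linear terms, I would first use effective linear algebra over $K$ to solve the equations $f_k=0$, obtaining a parametrization $\bar x=A\bar y+\bar b$ with free $\bar y\in K^d$ (or detecting inconsistency over $K$). The integrality condition $A\bar y+\bar b\in\mathcal{O}_{K,S}^m$ and the divisibilities $g_j\mid h_j$ then become conjunctions of valuation inequalities $\ord_{\pp_i}(\phi_l(\bar y))\le\ord_{\pp_i}(\psi_l(\bar y))$ on finitely many $K$-linear forms $\phi_l,\psi_l$ at the $\pp_i$. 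To handle forms that might vanish identically, enumerate the finitely many ``vanishing patterns'' on these forms; within each pattern the surviving forms are not identically zero as $K$-linear forms and the divisibility conditions become honest valuation comparisons.

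\textbf{Local reduction and decidability.} For each vanishing pattern I would claim that the resulting system is satisfiable in $K^d$ iff, at each $\pp_i$ independently, a local version is satisfiable in $K_{\pp_i}^d$. The forward direction is immediate via the embedding $K\hookrightarrow K_{\pp_i}$. For the converse, apply weak approximation in $K$ at the finite set $\{\pp_i\}$: given local solutions $\bar y^{(i)}\in K_{\pp_i}^d$ and a precision $N$ strictly greater than all valuations appearing in the local constraints, produce a global $\bar y\in K^d$ with $\bar y\equiv \bar y^{(i)}\pmod{\pp_i^N}$ for every $i$; by the non-Archimedean triangle inequality and the linearity of the forms, every relevant valuation is preserved and every inequality transfers from local to global. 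Each local problem is now an existential $\mathcal L_{\rm ring,\rm ord}$-sentence in $K_{\pp_i}$, a finite extension of $\mathbb F_p((t))$ (after identifying the completion of $\mathbb F_p(t)$ at the finite prime beneath $\pp_i$ with $\mathbb F_p((u))\cong \mathbb F_p((t))$, followed by an unramified extension corresponding to the residue-field growth). Using a fixed basis of $K_{\pp_i}$ over $\mathbb F_p((t))$ to encode its elements as coordinate tuples, one obtains an effective interpretation in the existential theory of $\mathbb F_p((t))$, and the hypothesis delivers the decision procedure.

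\textbf{Main obstacle.} The most delicate step is the global-to-local reduction in the borderline situation where a linear form happens to vanish at a local solution $\bar y^{(i)}$ without being identically zero as a form over $K$: its valuation then jumps to $+\infty$ locally, which no finite approximation can reproduce. The preliminary vanishing-pattern case split is designed to eliminate exactly these pathologies, but ensuring that the precision $N$ in the approximation step can be chosen effectively, uniformly in the data, is the core technical point; I expect this to parallel the corresponding construction in the proof of Theorem~\ref{maindec1}, with the extra bookkeeping needed to accommodate the operation $\cdot t$ and the residue-field extensions at each completion.
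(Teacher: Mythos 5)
Your overall plan — parametrize the linear equations, localize at the finitely many primes in $T=S^c$, decide each local problem by interpreting $K_{\pp_i}$ in $\mathbb{F}_p((t))$ via a basis, and glue with weak approximation — is exactly the structure of the paper's proof. However, there are two substantive gaps.

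The more serious one is precisely the obstacle you flag but do not resolve. Your vanishing-pattern case split eliminates forms that are \emph{identically} zero as $K$-linear forms, but it does nothing about a form $\psi_l$ that is not identically zero yet happens to evaluate to $0$ at some chosen local solution $\bar y^{(i)}\in K_{\pp_i}^d$. In that case $\ord_{\pp_i}(\psi_l(\bar y^{(i)}))=\infty$, and no finite-precision weak approximation can reproduce an infinite valuation; a global approximant may then violate the inequality $\ord_{\pp_i}(\phi_l)\le\ord_{\pp_i}(\psi_l)$. The paper circumvents this by changing the \emph{local} sentence it asks the algorithm to decide, not by a case split over $K$: it replaces $\mid$ with the relation $x\|y\;(\Leftrightarrow x\mid y \wedge y\neq 0)$, and in the translated sentence $\varphi_\pp$ it forces the divisor's target $l_2(A\bar y+\bar c)$ to be a \emph{unit} of $K_\pp$ (the clause $l_2(A\bar y+\bar c)\cdot z_{i_2}=1$ in Section~\ref{sect:translate}). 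This guarantees that any local witness $\bar y_\pp$ has $\ord_\pp(l_2(A\bar y_\pp+\bar c))$ finite, and the precision $n_\pp$ is then chosen (per witness, in the \emph{proof}, not in the algorithm) so that the Artin--Whaples approximant preserves all the relevant valuations exactly. Incidentally, this means your worry about choosing $N$ ``effectively, uniformly in the data'' is a red herring: the algorithm only decides the local sentences $\varphi_\pp$, which is doable by Proposition~\ref{prop:completions}; the approximation argument appears only in the correctness proof (Theorem~\ref{thm:main}), where $n_\pp$ may legitimately depend on the already-chosen $\bar y_\pp$.

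The second gap is that you treat only positive existential sentences $\exists\bar x(\bigwedge f_k=0\wedge\bigwedge g_j\mid h_j)$, whereas Theorem~\ref{maindec2} is about the full existential theory, so negative literals $l\neq 0$ and $\neg(g\mid h)$ occur. The paper handles these in Proposition~\ref{prop:Lbar}: $\neg(x\mid y)$ unfolds over $\O_{K,S}$ into the disjunction $(x=0\wedge y\neq 0)\vee\bigvee_{i=1}^n o_i(x,y)$, where $o_i$ asserts $x\neq 0$ and $\ord_{\pp_i}(x)>\ord_{\pp_i}(y)$; after passing to disjunctive normal form (Corollary~\ref{corollary:normal-form}), each $o_j$ is translated locally only at the matching prime $\pp_j$ and becomes a tautology at the others. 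Your proposal would need an analogue of this; it is not delivered by the vanishing-pattern enumeration, since $\neg(g\mid h)$ is a disjunction over the primes of $T$, not a statement about identical vanishing.

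Finally, a smaller remark: your interpretation of $K_{\pp_i}$ as a finite-rank $\mathbb{F}_p((t))$-module is the right idea, but for the interpretation to be \emph{effective} one must be able to compute the structure constants of the multiplication; the paper handles this by invoking Krasner's Lemma and separability to put the minimal polynomial coefficients in $F=\mathbb{F}_p(t)$. Worth making explicit.
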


It is not known whether the existential theory of $\mathbb F_p((t))$ is decidable as a valued field. By a result of Denef and Schoutens \cite{DS99}, it would follow from resolution of singularities in positive characteristic --- we thank Sylvy Anscombe and Arno Fehm for pointing out to us that the analogue of Theorem \ref{maindec2} without $\cdot t$ in the language can be obtained unconditionally.

The main idea of the proof is to translate any given formula to a corresponding formula in the completion at each prime in the complement of $S$, and then use some tools from approximation theory to relate these individual decision problems to the original one. 



In contrast we obtain an undecidability result for the case where $S$ is a finite set of places of cardinality larger than one.

\begin{theorem}\label{mainund}
Assume that $K$ has odd characteristic. If $S$ is finite with at least two elements, then multiplication is positive-existentially definable over the structure $(\O_{K,S}, 0,1, +, |, \cdot t )$. In particular, the positive-exitential theory of this structure is undecidable.
\end{theorem}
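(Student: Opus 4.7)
The plan is to positive-existentially define the multiplication graph $\{(x,y,z):z=xy\}$ in the structure $(\O_{K,S},0,1,+,\mid,\cdot t)$; the undecidability of the positive-existential theory then follows by transfer from Hilbert's tenth problem over $\O_{K,S}$, which is known to be unsolvable for function fields of odd characteristic (Pheidas, Shlapentokh, and references cited in the introduction).

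First I would reduce the task to a positive-existential definition of the squaring graph $\{(u,v):v=u^2\}$. Because $\mathrm{char}(K)$ is odd, the element $2=1+1$ is a unit of $\O_{K,S}$, and the polarization identity $2xy=(x+y)^2-x^2-y^2$ can be rewritten without subtraction as
\[
(x+y)^2=x^2+y^2+z+z,
\]
valid precisely when $z=xy$. Hence, once squaring is defined, $z=xy$ is expressed by introducing three existential witnesses for $(x+y)^2$, $x^2$, and $y^2$ and imposing one $\F_p[t]$-linear equation. Immediate definable auxiliaries are ``$u$ is a unit'' ($u\mid 1$) and ``$a$ is associate to $b$'' ($a\mid b\wedge b\mid a$). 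Moreover, the hypothesis $|S|\ge 2$ gives, via the $S$-unit theorem, that $\O_{K,S}^{*}$ has free rank $\ge 1$ and that $\O_{K,S}^{*}/(\O_{K,S}^{*})^{2}$ is a finite elementary abelian $2$-group; both facts will be used below.

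The technical heart is to positive-existentially define squaring, which I would accomplish in two layers. The first layer isolates the predicate ``$\ord_\pp(x)$ is even for every $\pp\notin S$'', equivalently $x=u\cdot y^{2}$ for some unit $u$ and some $y\in\O_{K,S}$; this ``divisor is twice an effective divisor outside $S$'' condition should be captured positive-existentially by witnessing the halving of the divisor through pairs of associate variables that encode the would-be square root of the divisor. The second layer then refines this set to genuine squares by characterizing the trivial coset of $\O_{K,S}^{*}/(\O_{K,S}^{*})^{2}$ as the common kernel of finitely many quadratic characters, each realizable as a quadratic residue symbol at a suitably chosen prime $\pp$, and therefore expressible as a finite disjunction of linear congruences ``$\pp\mid u-c$'' with $c$ ranging over the (finite) set of squares in the residue field $\O_{K,S}/\pp$. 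The operator $\cdot t$ is used crucially here to produce elements with prescribed valuations at primes above $t$, thereby supplying the test primes needed to separate the finitely many unit cosets.

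The main obstacle will be the first layer: encoding ``divisor is even outside $S$'' positive-existentially, given that we have no direct access to products. The plan to surmount it is to simulate the target relation ``product of two associates equals $x$ up to a unit'' by a carefully chosen system of divisibility and $\F_p[t]$-linear constraints, exploiting that associates share valuations at every $\pp\notin S$, and appealing to weak (or strong) approximation at the finite set of primes in $S$ to realize the required witnesses. Once squaring is positive-existentially defined, the polarization identity from the first step produces the full multiplication graph, and the positive-existential theory of $(\O_{K,S},0,1,+,\mid,\cdot t)$ is thereby undecidable.
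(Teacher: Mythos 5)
Your route is structurally different from the paper's, and the reduction of multiplication to squaring via polarization (using $\mathrm{char}\ne 2$) is fine and is the same implicit final step the paper uses. The paper, however, does not go through your ``even divisor outside $S$'' $+$ ``quadratic-character'' decomposition; it (i) first shows that $\neq 0$ is positive-existentially definable (this needs Lenstra's theorem on the index of $\psi_\pp(\O_{K,S}^\times)$ in $k(\pp)^\times$ together with Kornblum's theorem), (ii) defines the square function \emph{restricted to units} by a divisibility formula built from carefully chosen irreducibles $q_{i,j}$ and a pigeonhole argument on the finitely many constant units, and (iii) extends from units to all of $\O_{K,S}$ through degree estimates involving the class number $h_K$ and a ``large unit'' $\varepsilon$ produced by repeated squaring. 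Steps (i)--(iii) are all made concrete; none of them hinges on an intermediate predicate like your Layer 1.

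The genuine gap in your proposal is Layer 1. The claim ``the condition that $\ord_\pp(x)$ is even for all $\pp\notin S$ can be captured positive-existentially by witnessing the halving of the divisor through pairs of associate variables'' is exactly the kind of multiplicative condition that the language $\{0,1,+,\mid,\cdot t\}$ cannot express in any obvious way: associate ($a\mid b\wedge b\mid a$) only says $\ord_\pp(a)=\ord_\pp(b)$, and from divisibility alone you can only compare $\ord_\pp$'s, never add them. To say ``$\ord_\pp(x)=2\,\ord_\pp(y)$'' you would need $y^2$, which is the very relation being defined, so the plan is circular as written. There is also a secondary inaccuracy: ``$\ord_\pp(x)$ even for all $\pp\notin S$'' is equivalent to ``$x=u\,y^2$ with $u$ a unit'' only when the $2$-torsion of the class group of $\O_{K,S}$ is trivial; in general one needs the half-divisor to be principal. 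Finally, your Layer 2 appeals to relations of the form ``$\pp\mid u-c$'' for a prime ideal $\pp$, but the divisibility symbol of the structure is between elements, not ideals, and the test prime $\pp$ need not be principal; the paper's corresponding step (Proposition~\ref{squareunits}) carefully chooses irreducible polynomials $q_{i,j}\in\F_p[t]$ to serve as the test elements and runs an order-comparison argument plus pigeonhole instead of a quadratic-residue computation. You would need a concrete, non-circular replacement for Layer 1 and a way to express the residue-symbol conditions with element divisibility before the outline can be turned into a proof.
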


The above Theorem  extends the results of Lipshitz \cite{L78b} to the ring of $S$-integers\footnote{When $S$ is finite the holomorphy rings are usually called rings of $S$-integers.} of a global function field. 
The idea of the proof is roughly as follows. We break the proof into three steps. First we show that being different from zero is positive-existentially definable. This step turns out to be quite technical and requires a deep Theorem of Lenstra \cite{Len77} from analytic number theory. The second step is to define the square function restricted to the set of units. This is the easiest part and just uses basic facts of valuation theory. The third step consists of positive-existentially defining the square function on $\O_{K,S}$. This is the hardest part. The formulas we use are quite similar to the ones used in \cite{L78b} and \cite{CM17}. However, the proofs are quite different, the main new ingredients are the use of the pigeon hole principle (which allows us to improve the result of Lipshitz) and the use of the  ideal class number and the lattice of units of $\O_{K,S}^\times$ (which allows us to extend the results of \cite{CM17}).

The structure of the paper is as follows. Section \ref{decres} details the proof of Theorems \ref{maindec1} and \ref{maindec2}. The undecidability result in Theorem \ref{mainund} is covered in Section \ref{undres}. It is worth mentioning that both sections can be read independently.

%
\section{Decidability results}\label{decres}

The purpose of this section is to prove Theorems \ref{maindec1} and \ref{maindec2}.
Let $R$ be either $\mathbb Z$ or $\mathbb F_p[t]$, $F$ the fraction field of $R$, $K/F$ a finite separable extension and  $\O_K$  the ring of integers of $K$. We consider the languages $\LL_{\Div}=\{0,1,+,|,= \}$ and $\LL_{\Div,t}=\LL_{\Div}\cup \{\cdot t\}$. 
Let $T=\{\mathfrak{p}_1,\dots ,\mathfrak{p}_n\}$ be a finite set of prime ideals of $\O_K$ not containing any of the primes at infinity, and let $S$ be its complement in the set of prime ideals. We consider the holomorphy ring $\O_{K,S}$ as an $\mathcal{L}_{\Div}$-structure ($\LL_{\Div,t}$-structure) by interpreting each symbol as expected. We will refer to this structure as $\mathcal{M}_S$.

For ease of reading, we will summarize the steps of the decision algorithm. 

\noindent\textbf{Problem.} Determine whether a system of equations and relations of the form
$$
\varphi=\bigwedge\varphi_i(\overline x)
$$
in the given language has a solution in $\O_{K,T}$.

\begin{enumerate}[Step 1.]
	\item Construct an equivalent system
	$$
	\overline\varphi=\bigwedge\overline\varphi_i(\overline x)
	$$
	in a more convenient language (referred to as $\overline\LL$, cf. Proposition \ref{prop:Lbar}).
	
	\item Produce the system $L_\varphi$ of all linear equations among the $\overline\varphi_i$.
	
	\item Use Gaussian Elimination to decide whether the system $L_\varphi$ has a solution in $F$.
	\begin{itemize}
		\item If it does not have a solution, the answer to the problem is \textbf{negative} (cf. Lemma \ref{lemma:linear}).
		
		\item If it has a solution, its solution set is definable and obtainable from Gaussian Elimination.
	\end{itemize}
	
	\item For each prime ideal $\pp$ in $T$, and using the solution set obtained in Step 3, produce a specific sentence $\varphi_\pp$ in the language of valued fields, as described in Section \ref{sect:translate}.
	
	\item For each prime ideal $\pp$ in $T$, decide whether $\varphi_\pp$ holds in the respective completion $K_\pp$ (doable -- cf. Proposition \ref{prop:completions}).
	\begin{itemize}
		\item If each one of them holds, the answer to the problem is \textbf{positive} (cf. Theorem \ref{thm:main}).
		
		\item If one of them does not hold, the answer to the problem is \textbf{negative} (cf. Proposition \ref{prop:local}).
	\end{itemize}
\end{enumerate}

\subsection{Decidability of the completions}

For this subsection, fix a prime ideal $\pp\in T$. Let $\bf{K}_\pp$ be the completion of $K$ with respect to the $\pp$-valuation. Also let $\LL_{\ord}=\{0,1,+,\cdot,=,\ord\}$ be the language of valued fields, and $\LL_{\ord,t}=\LL_{\ord}\cup\{t\}$, where $t$ is a symbol of constant interpreted by $t$. The aim of this section is to prove the following proposition, which may be known. We provide some proofs because we were unable to find them in the literature.
\begin{proposition} \label{prop:completions}
\begin{itemize}
\item If $F=\mathbb{Q}$, then the first--order theory of $\bf{K}_\pp$ in the language $\LL_{\ord}$ is decidable.
\item \cite{AF16} If $F=\mathbb{F}_p(t)$, then the existential theory of $\bf{K}_\pp$ in the language $\LL_{\ord}$ is decidable.
\item Assume that the existential theory of $\mathbb F_p((t))$ in the language $\LL_{\ord}$ is decidable. If $F=\mathbb{F}_p(t)$, then the existential theory of $\bf{K}_\pp$ in the language $\LL_{\ord,t}$ is decidable.
\end{itemize}
\end{proposition}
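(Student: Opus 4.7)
The three items have rather different flavors, so I would treat them separately. For the first item, $\mathbf{K}_\pp$ is a finite extension of $\Q_p$ (a $p$-adic field), and the decidability of its full first-order theory in the valued-field language $\LL_{\ord}$ is classical: it follows from Macintyre's model completeness and quantifier elimination for $\Q_p$ and its extension to arbitrary finite extensions of $\Q_p$ through the Prestel--Roquette theory of $p$-adically closed fields. My plan is simply to invoke these results. The second item is explicitly attributed to Anscombe--Fehm \cite{AF16}, so I would cite it.

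For the third item, which is the real work, the plan is a finite-basis interpretation combined with a trick for the constant $t$. First, $\mathbf{K}_\pp$ is a finite separable extension of $F_v$, the completion of $\F_p(t)$ at $v=\pp\cap\F_p(t)$, and $F_v$ is isomorphic as a valued field to $\F_{p^d}((q))$ where $q\in\F_p[t]$ is the monic irreducible polynomial generating $v$ (with $d=\deg q$). Hence $\mathbf{K}_\pp$ is a finite separable extension of $\F_p((q))$, and $\F_p((q))$ is abstractly isomorphic as a valued field to $\F_p((t))$. Fix a primitive element $\theta$ so that $\mathbf{K}_\pp=\F_p((q))(\theta)$ of some degree $n$, and encode each $\alpha\in\mathbf{K}_\pp$ by its $n$-tuple of coordinates in the basis $\{1,\theta,\dots,\theta^{n-1}\}$. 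Ring operations translate into polynomial operations on tuples, and $\alpha\in\O_{\mathbf{K}_\pp}$ is captured positive-existentially as integrality over $\O_{\F_p((q))}$, via the existence of a monic polynomial with coefficients in the valuation ring vanishing on $\alpha$. Under this translation, each existential $\LL_{\ord,t}$-sentence about $\mathbf{K}_\pp$ becomes an existential sentence about $\F_p((q))$ in a language that includes names for the specific coordinates of $t$.

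The main obstacle is handling those constants. The coordinates of $t$ are specific power series in $q$ with $\F_p$-coefficients, but $\LL_{\ord}$ over $\F_p((t))$ provides no constant for $q$. I would existentially quantify over an element $\pi$ of $\F_p((t))$ to play the role of $q$, using that any two uniformizers of $\F_p((t))$ are related by a valued-field automorphism (the substitution $t\mapsto\pi$ extends to a valued-field isomorphism of $\F_p((t))$) and that existential sentences are automorphism-invariant. Carrying this out within positive-existential syntax requires a positive-existential characterization of ``being a uniformizer'', or at least of a subset of uniformizers whose elements are all automorphic to $t$. In characteristic $p$, the maximal ideal itself is positive-existentially definable through Artin--Schreier: $y\in\mathfrak{m}$ iff $\exists z\,(\ord(z)\wedge z^p-z=y)$; a refinement of this construction should single out the needed class. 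This is where I expect the proof to be most technical; once it is done, the hypothesis decides the resulting $\LL_{\ord}$-sentence about $\F_p((t))$.
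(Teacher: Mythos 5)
For items 1 and 2 your citations are compatible with the paper's (which invokes Ax--Kochen and Ersov for the $p$-adic case, and Anscombe--Fehm for item 2). For item 3 the overall blueprint---present $\mathbf{K}_\pp$ as a finite separable extension of $\mathbb{F}_p((q))\cong\mathbb{F}_p((t))$, encode it componentwise via a primitive element, and recover the valuation ring---matches the paper's interpretation strategy, and your bounded-degree integrality description of $\O_{\mathbf{K}_\pp}$ over $\O_{\mathbb{F}_p((q))}$ is a workable, more elementary replacement for the paper's appeal to Anscombe--Koenigsmann and Fehm. You do, however, omit one ingredient the paper uses: Krasner's Lemma is invoked precisely so that the primitive element's minimal polynomial has coefficients in $F=\mathbb{F}_p(t)$, making all the structure constants of the interpretation (and hence the coordinates of $t$) rational expressions in whatever plays the role of $q$; without some such device, the ``names for the specific coordinates of $t$'' that you mention are arbitrary Laurent series and cannot be written in the language at all.

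The substantive gap is the step you propose for eliminating the constant $q$. Quantifying existentially over a uniformizer $\pi$ and exploiting that every uniformizer is automorphic to $t$ is sound as far as it goes, but it requires an existential $\LL_{\ord}$-formula whose realizations in $\mathbb{F}_p((t))$ form a nonempty set of uniformizers. The Artin--Schreier formula $\exists w\,(\ord(w)\wedge w^p-w=y)$ defines the whole maximal ideal $\mathfrak m$, which is much too large, and carving out $\mathfrak m\setminus\mathfrak m^2$ introduces a negated membership condition that is not existential; you give no indication of how the ``refinement'' would go, and I do not see one. I also doubt the step is needed: this Proposition supports Theorem \ref{maindec2}, whose stated hypothesis is decidability of the existential theory of $\mathbb{F}_p((t))$ in $\{0,1,+,\cdot,=,\cdot t,\ord\}$, i.e.\ with $\cdot t$ present. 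The bare $\LL_{\ord}$ in the Proposition's third bullet looks like a slip---by the second bullet that assumption is already a theorem of Anscombe--Fehm, which would make it vacuous---and once $\cdot t$ is in the language, $t$ already names a uniformizer of the target structure and can be used as $q$ outright. The automorphism-invariance observation you make is exactly why the particular choice of uniformizer does not matter, but with $t$ available there is nothing left to define existentially.
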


\begin{proof}
By \cite[Chap. II, Cor. 8.4]{Neuk99}, $\bf{K}_\pp$ is a finite extension of ${\bf F}_P$, where $P$ is the prime in $F$ lying below $\pp$. If the degree of this extension is $n$, we want to interpret $\bf{K}_\pp$ as ${\bf F}_P^n$ in the ring structure of ${\bf F}_P$.

Write ${\bf K}_\pp={\bf F}_P(\alpha)$. The field ${\bf K}_\pp$ can be interpreted as ${\bf F}_P^n$ with componentwise addition, and a multiplication map defined using the coefficients of the minimal polynomial of $\alpha$. By Krasner's Lemma (cf. \cite[3.4.2, Prop. 5]{BGR84}) and the separability of the extension ${\bf K}_\pp / {\bf F}_P$, we can choose $\alpha$ such that these coefficients lie in $F$, and hence this interpretation is definable without quantifiers.

The valuation ring of ${\bf K}_\pp$ is existentially definable in the ring structure  ${\bf K}_\pp$ (see \cite{AK14} or \cite{Fehm15}), and hence existentially definable in the ring structure  ${\bf F}_P$.

The conclusions follow from the corresponding results by Ersh\"ov \cite{E65} and Ax-Kochen \cite{AK65} (for $\mathbb{Q}_p$) and our main assumption (for $\mathbb{F}_p((t))$).
\end{proof}

\subsection{Reducing divisibility}

Let $\overline{\mathcal{L}}=(\mathcal{L}_{\Div}\setminus\{|\})\cup\{\neq,\|,o_1,\dots ,o_n \}$ (adding $\cdot t$ for the proof of Theorem \ref{maindec2} --- the proofs for Theorems \ref{maindec1} and \ref{maindec2} are identical save for this fact), where all of these new symbols are binary relation symbols. We interpret $\neq$ as inequality, $\|$ as the binary relation given by
\[
x\|y\textrm{ if and only if }x|y\textrm{ and }y\neq 0,
\]
and each $o_i$ as the relation
\[
o_i(x,y)\textrm{ if and only if }x\neq 0\textrm{ and }\ord_{\pp_i}(x)>\ord_{\pp_i}(y).
\]


\begin{proposition} \label{prop:Lbar}
For each existential $\LL$-formula $\varphi(\overline x)$ there exists a positive-existential $\overline{\LL}$-formula $\overline{\varphi}(\overline x)$ such that, for every $\overline c\in \O_{K,S}$,
\[
\MM_S\models \varphi(\overline c)\textrm{ if and only if }\overline{\MM}_S\models \overline\varphi(\overline c).
\]
Moreover, $\overline{\varphi}(\overline x)$ can be effectively constructed from $\varphi(\overline x)$.
\end{proposition}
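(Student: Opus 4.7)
The plan is a standard normal-form reduction. I would first put $\varphi(\overline{x})$ in prenex form $\exists \overline{y}\,\psi(\overline{x},\overline{y})$ with $\psi$ quantifier-free, and then put $\psi$ into disjunctive normal form as a disjunction of conjunctions of literals (atomic formulas and their negations). Since the class of positive-existential $\overline{\mathcal{L}}$-formulas is closed under conjunction, disjunction, and existential quantification, it is enough to exhibit, for each type of literal of $\mathcal{L}$, an equivalent (on $\mathcal{M}_S$) positive-existential $\overline{\mathcal{L}}$-formula and then reassemble.

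The equational literals are immediate: $u=v$ is already a positive atom, and $\neg(u=v)$ is rewritten as the $\overline{\mathcal{L}}$-atom $u\neq v$. A positive divisibility literal $u\mid v$ holds in $\mathcal{M}_S$ exactly when either $v=0$ or $u\|v$, yielding the positive-existential translation $(v=0)\vee (u\|v)$.

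The only substantive case is $\neg(u\mid v)$. Here I would use the defining property of the holomorphy ring, $\mathcal{O}_{K,S}=\{x\in K:\ord_{\mathfrak{p}}(x)\geq 0 \text{ for every } \mathfrak{p}\in T\}$. If $u\neq 0$, then $u\mid v$ in $\mathcal{M}_S$ is equivalent to $\ord_{\mathfrak{p}_i}(v)\geq\ord_{\mathfrak{p}_i}(u)$ for every $i=1,\dots,n$; if $u=0$, then $u\mid v$ reduces to $v=0$. Splitting cases and negating yields
$$
\neg(u\mid v)\;\Longleftrightarrow\;\bigl(u=0\wedge v\neq 0\bigr)\;\vee\;\bigvee_{i=1}^{n} o_i(u,v),
$$
which is positive-existential in $\overline{\mathcal{L}}$; note that $o_i(u,v)$ already entails $u\neq 0$, so no extra clause is needed on the second disjunct.

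Substituting these translations literal-by-literal inside the DNF, distributing conjunctions over disjunctions, and reattaching the prefix $\exists\overline{y}$ produces the desired $\overline{\varphi}(\overline{x})$. Effectiveness is clear since every step above is purely syntactic, and handles the finitely many primes $\mathfrak{p}_1,\ldots,\mathfrak{p}_n$ of $T$ uniformly. The only point where one has to be slightly careful is the non-divisibility case, and in particular the degenerate $u=0$ branch; apart from that, the argument is entirely mechanical Boolean manipulation, so I do not expect any genuine obstacle.
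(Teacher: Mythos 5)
Your proposal is correct and follows essentially the same route as the paper: the paper's proof consists precisely of stating the two equivalences $x\mid y \Leftrightarrow (y=0 \vee x\|y)$ and $\neg(x\mid y) \Leftrightarrow (x=0\wedge y\neq 0)\vee\bigvee_{i=1}^n o_i(x,y)$, leaving the disjunctive-normal-form reassembly implicit. You have simply spelled out the surrounding Boolean bookkeeping and justified the second equivalence via the definition of $\O_{K,S}$, which is what the paper leaves to the reader.
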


\begin{proof}
Note that, for $x,y\in \O_{K,S}$,
\[
\MM_S\models x|y\textrm{ if and only if }\overline\MM_S\models y=0\vee x\| y
\]
and
\[
\MM_S\models\neg x|y\textrm{ if and only if }\overline\MM_S\models (x=0\wedge y\neq 0)\vee\bigvee_{i=1}^n o_i(x,y).
\]
\end{proof}

\begin{corollary} \label{corollary:normal-form}
Every existential $\LL$-sentence is equivalent (in the sense of the previous Proposition) to a disjunction of $\overline\LL$-sentences of the form
\[
\exists\overline x\bigwedge \varphi_i(\overline x)
\]
where each $\varphi_i(\overline x)$ is of one of the forms
\begin{enumerate}[(i)]
\item $l_1(\overline x)\|l_2(\overline x)$,
\item $l_1(\overline x)=0$,
\item $l_1(\overline x)\neq 0$ or
\item $o_j(l_1(\overline x),l_2(\overline x))$.
\end{enumerate}
and all the $l_i(\overline x)$ are linear polynomials with coefficients in $R$.
\end{corollary}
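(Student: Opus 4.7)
My strategy is to chain Proposition \ref{prop:Lbar} with two standard propositional moves. Starting from an arbitrary existential $\LL$-sentence $\exists\overline{x}\,\psi(\overline{x})$, the first step is to invoke Proposition \ref{prop:Lbar} to obtain an equivalent positive-existential $\overline{\LL}$-sentence $\exists\overline{x}\,\overline{\psi}(\overline{x})$, whose matrix $\overline{\psi}$ is built from atomic formulas using only $\wedge$ and $\vee$.

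The second step is purely propositional: I would rewrite $\overline{\psi}$ in disjunctive normal form $\bigvee_j\bigwedge_i\chi_{ij}(\overline{x})$ and then pull the existential quantifier across the disjunction to obtain $\bigvee_j\exists\overline{x}\bigwedge_i\chi_{ij}(\overline{x})$, which already has the external shape claimed by the corollary.

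The third step is to verify that each atomic $\overline{\LL}$-formula $\chi_{ij}$ has one of the forms (i)--(iv). Its predicate symbol belongs to $\{=,\neq,\|,o_1,\dots,o_n\}$ and each of its arguments is an $\overline{\LL}$-term, namely a linear expression in $\overline{x}$ with coefficients in $R$ built from $0$, $1$, $+$ and (in the function-field case) $\cdot t$. The atoms $l_1\|l_2$ and $o_j(l_1,l_2)$ already match (i) and (iv). For atoms of shape $t_1=t_2$ and $t_1\neq t_2$, I would rewrite them as $(t_1-t_2)=0$ and $(t_1-t_2)\neq 0$; these have the forms (ii) and (iii), since $R$ admits negative coefficients even though the object language itself has no subtraction symbol. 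I do not foresee any serious obstacle; the whole construction is syntactic, effective, and uniform across the number field and function field settings.
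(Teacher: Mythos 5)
Your proof is correct and follows the route the paper implicitly intends (the corollary is stated without proof): apply Proposition~\ref{prop:Lbar}, prenex and pass the matrix to disjunctive normal form, then distribute the existential block over the disjunction. Your care in noting that only the $=$ and $\neq$ atoms need rewriting from $t_1 = t_2$ to $(t_1 - t_2)=0$ --- thereby justifying the phrase ``linear polynomials with coefficients in $R$'' rather than ``$\overline{\LL}$-terms,'' since $\LL_{\Div}$ has no subtraction when $R=\mathbb{Z}$ --- while the arguments of $\|$ and $o_j$ remain genuine $\overline{\LL}$-terms, is exactly the point worth spelling out.
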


\subsection{Reduction of the terms $l(\overline x)=0$}

Let $\varphi$ be an $\overline\LL$-sentence of the form
\[
\exists\overline x\bigwedge \varphi_i(\overline x)
\]
as described in Corollary \ref{corollary:normal-form}. Call any such sentence \emph{normal}. Let $L_\varphi$ be the set of all linear polynomials $l(\overline x)$ such that one of the $\varphi_i(\overline x)$ is the formula $l(\overline x)=0$. We consider the system
\[
\bigwedge_{l(\overline x)\in L_\varphi}l(\overline x)=0
\]
of linear equations as a system over $F$. Using basic linear algebra tools, there exists an algorithm that checks whether this system has a solution over $F$, and if this answer is positive returns a matrix $A$ and a vector $\overline c$, both with entries in $F$, such that
\[
\left<F;0,1,+,\cdot\right>\models \forall \overline y \left(\bigwedge_{l(\overline x)\in L_\varphi}l(A\overline y+\overline c)=0\right)
\]
and
\[
\left<F;0,1,+,\cdot\right>\models \forall \overline x \exists \overline y\left(\bigwedge_{l(\overline x)\in L_\varphi}l(\overline x)=0\rightarrow \overline x=A\overline y+\overline c\right).
\]

Call this algorithm $\texttt{Lin}$, and this application $\texttt{Lin}(\varphi)$.

\begin{lemma} \label{lemma:linear}
Let $\varphi$ be a normal $\overline\LL$-sentence. If $\texttt{Lin}(\varphi)$ has a negative answer, then $\overline\MM_S\not\models \varphi$.
\end{lemma}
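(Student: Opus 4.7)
The plan is to prove the contrapositive: assuming $\overline{\MM}_S \models \varphi$, I will show that the linear subsystem $\{l(\bar x) = 0 : l \in L_\varphi\}$ has a solution in $F^n$, so that $\texttt{Lin}(\varphi)$ returns a positive answer.

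So suppose $\overline{\MM}_S \models \varphi$. Then there exists a tuple $\bar c \in \O_{K,S}^n$ satisfying every conjunct $\varphi_i(\bar c)$; in particular, $l(\bar c) = 0$ for every $l \in L_\varphi$. Since $\O_{K,S} \subset K$, the tuple $\bar c$ is an element of $K^n$, and since the coefficients of the polynomials in $L_\varphi$ lie in $R \subset F$, this exhibits $\bar c$ as a solution of the linear system $\bigwedge_{l \in L_\varphi} l(\bar x) = 0$ inside the field extension $K/F$.

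The key (and essentially only) point is then to observe that solvability of an $F$-linear system is preserved downward under field extension: writing the system as $A\bar x = \bar b$ with $A, \bar b$ having entries in $F$, solvability is equivalent to the Rouché--Capelli condition $\operatorname{rank}(A) = \operatorname{rank}([A\mid \bar b])$, and the rank of a matrix with entries in $F$ does not change when the matrix is viewed over the extension $K$. Hence the existence of $\bar c \in K^n$ solving the system forces the existence of a solution in $F^n$. Therefore $\texttt{Lin}(\varphi)$ succeeds and returns a matrix $A$ and vector $\bar c$ parametrizing the solution set, contradicting the hypothesis of a negative answer.

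I do not foresee any obstacle: the statement is a direct consequence of the extension-invariance of matrix rank, and no arithmetic information about $\O_{K,S}$ (such as its being a proper subring of $K$, or the divisibility/ordering relations $\|$, $o_j$, $\neq$ appearing in the other conjuncts $\varphi_i$) enters the argument — those conjuncts are simply discarded, since we only need one necessary condition for $\varphi$ to be satisfiable.
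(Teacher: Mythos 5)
Your proof is correct. The paper states Lemma~\ref{lemma:linear} without proof, treating it as immediate, and the argument you give is the natural one: a satisfying assignment in $\O_{K,S}\subseteq K$ yields a $K$-solution of the $F$-linear subsystem, and rank invariance of matrices under the field extension $K/F$ then forces an $F$-solution, so $\texttt{Lin}(\varphi)$ cannot return a negative answer. Nothing is missing; the only thing worth flagging is a minor notational collision (you reuse $\bar c$ both for the satisfying tuple and for the particular-solution vector returned by $\texttt{Lin}$), which is harmless here but worth avoiding.
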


If $L_\varphi$ is empty, we consider $\texttt{Lin}(\varphi)$ as having a positive answer, with $A$ the identity matrix and $\overline c$ the zero vector.

\subsection{Translating the problem into completions} \label{sect:translate}

Fix a prime ideal $\pp\in T$. Given a normal $\overline\LL$-sentence $\varphi$ such that $\texttt{Lin}(\varphi)$ returns a positive answer and a pair $(A,\overline c)$, we aim to effectively construct an $\LL_{\ord}$-sentence $\varphi_\pp$ such that
\[
\O_{K,S}\models \varphi\textrm{ implies } \bf{K}_\pp\models \varphi_\pp.
\]

We will show that the following formula $\varphi_\pp$ works: 
\[
\exists\overline y\left( \bigwedge \varphi_{\pp,i}(\overline y) \wedge \ord( A\overline y+\overline c)\ge 0\right)
\]
where $\ord$ of a tuple being nonnegative means that every entry has nonnegative order, and: 
\begin{enumerate}[(i)]

\item For each $i$ such that $\varphi_i(\overline x)$ is of the form $l_1(\overline x)\| l_2(\overline x)$, let $\varphi_{\pp,i}(\overline y)$ be
\[
\exists z_{i_1} \exists z_{i_2} \left(l_1(A\overline y+ \overline c)\cdot z_{i_1}=l_2(A\overline y+\overline c)\wedge \ord(z_{i_1})\ge 0\wedge l_2(A\overline y+ \overline c)\cdot z_{i_2}=1\right).
\]

\emph{Rationale\,: we aim to study divisibility locally at $\pp$ -- thus $l_1\mid l_2$ turns into $\ord(l_1)\le\ord(l_2)$. The introduction of the symbol $\|$ looks to exclude the trivial divisibility $0\mid 0$; as we are looking for a sentence to be interpreted over a field, this can be expressed as $l_2$ being a unit.}

\item For each $i$ such that $\varphi_i(\overline x)$ is of the form $l_1(\overline x)=0$, let $\varphi_{\pp,i}(\overline y)$ be $0=0$.

\emph{Rationale\,: the satisfaction of linear equations has already been considered in the previous section. As we are assuming that $\texttt{Lin}(\varphi)$ has a positive answer, linear equations can be replaced with tautologies.}

\item For each $i$ such that $\varphi_i(\overline x)$ is of the form $l_1(\overline x)\neq 0$, let $\varphi_{\pp,i}(\overline y)$ be
\[
\exists z_i\left(l_1(A\overline y+\overline c)\cdot z_i=1\right).
\]
%

\item For each $i$ such that $\varphi_i(\overline x)$ is of the form $o_j(l_1(\overline x),l_2(\overline x))$, we split into cases. If $\pp=\pp_j$, let $\varphi_{\pp,i}(\overline y)$ be $$\ord(l_1(A\overline y+ \overline c))>\ord(l_2(A\overline y+ \overline c)).$$ Otherwise, let $\varphi_{\pp,i}(\overline y)$ be $0=0$.

\emph{Rationale\,: each predicate $o_i$ refers to the order at one of the prime ideals in $T$. For the one that corresponds to $\pp$, we keep the same meaning by replacing it with $\ord$; for the other predicates, they will not be interpreted locally at $\pp$ and as such are replaced with a tautology.}

\end{enumerate}

\begin{proposition} \label{prop:local}
Let $\varphi$ be a normal $\overline\LL$-sentence such that $\texttt{Lin}(\varphi)$ has a positive answer. If $\O_{K,S}\models\varphi$, then $\bf{K}_\pp\models\varphi_\pp$.
\end{proposition}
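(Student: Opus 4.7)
The plan is to take an arbitrary tuple $\overline a$ in $\O_{K,S}$ witnessing $\O_{K,S}\models\varphi$, extract from it a tuple $\overline b$ over $F$ using the output of $\texttt{Lin}(\varphi)$, and show that $\overline b$, viewed inside $\bf{K}_\pp$, witnesses $\bf{K}_\pp\models\varphi_\pp$. Since $\overline a$ satisfies every $\varphi_i(\overline x)$, it in particular satisfies every linear equation in $L_\varphi$, so the second displayed property of $\texttt{Lin}$ (applied over $F$) provides $\overline b\in F$ with $\overline a=A\overline b+\overline c$. Via the inclusions $F\subseteq K\subseteq\bf{K}_\pp$, this $\overline b$ is the candidate witness for $\varphi_\pp$.

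To verify $\bf{K}_\pp\models\varphi_\pp(\overline b)$, I would first dispatch the integrality conjunct $\ord(A\overline b+\overline c)\ge 0$: this holds coordinate-wise, because $A\overline b+\overline c=\overline a$ and every entry of $\overline a$ lies in $\O_{K,S}$, so by the defining property of $\O_{K,S}$ together with $\pp\in T$ (hence $\pp\notin S$), each entry has nonnegative $\pp$-adic valuation. The remaining $\varphi_{\pp,i}$ I would handle by running through the four forms of $\varphi_i$ listed in Corollary \ref{corollary:normal-form}. Form (ii) and form (iv) with $\pp\neq\pp_j$ both translate to $0=0$ and are automatic. For form (iii), $l_1(\overline a)$ is nonzero in $K$, hence invertible in the field $\bf{K}_\pp$. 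For form (iv) with $\pp=\pp_j$, the assertion $o_j(l_1(\overline a),l_2(\overline a))$ is by definition $\ord_\pp(l_1(\overline a))>\ord_\pp(l_2(\overline a))$, which is exactly $\varphi_{\pp,i}(\overline b)$.

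The only clause with any substance is form (i), the divisibility $l_1(\overline a)\| l_2(\overline a)$. From this hypothesis I obtain $z\in\O_{K,S}$ with $l_1(\overline a)\,z=l_2(\overline a)$, together with the separate fact $l_2(\overline a)\neq 0$. Integrality of $z$ at $\pp$ yields $\ord_\pp(z)\ge 0$, and the equation $l_1(\overline a)\,z=l_2(\overline a)$ transfers verbatim to $\bf{K}_\pp$, furnishing the witness $z_{i_1}$. The witness $z_{i_2}$ is free: $l_2(\overline a)$ is a nonzero element of the field $\bf{K}_\pp$ and therefore invertible. I do not foresee any genuine obstacle here; the shape of $\varphi_\pp$ was designed precisely so that a global solution restricts painlessly to a local one, and the case analysis above exhausts the verification. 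The substantive direction of the overall program, of course, is the converse passage from a coherent family of local witnesses back to a single global one, which is handled separately in Theorem \ref{thm:main} via approximation.
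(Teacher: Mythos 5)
Your proof is correct and takes essentially the same route as the paper, whose justification is a one-liner (``This comes directly from the construction of $\varphi_\pp$, as $\O_{K,S}\subseteq \mathbf{K}_\pp$ and for every $x\in \O_{K,S}$ we have $\ord_\pp(x)\ge 0$''); you have simply unpacked it into a clause-by-clause verification. One small slip worth correcting: since $\overline a\in\O_{K,S}\subseteq K$ need not lie in $F$, the second displayed property of $\texttt{Lin}$ must be invoked over $K$ (it transfers from $F$ to any extension field by elementary linear algebra since $A,\overline c$ have entries in $F$), and the resulting witness $\overline b$ lies in $K$ rather than in $F$ as you state twice; this does not affect the argument, as all that is needed is $\overline b\in\mathbf{K}_\pp$.
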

\begin{proof}
This comes directly from the construction of $\varphi_\pp$, as $\O_{K,S}\subseteq \bf{K}_\pp$ and for every $x\in \O_{K,S}$ we have $\ord_\pp(x)\ge 0$.
\end{proof}

The following theorem gives the other direction. 

\begin{theorem} \label{thm:main}
Let $\varphi$ be a normal $\overline\LL$-sentence such that $\texttt{Lin}(\varphi)$ has a positive answer. If $K_\pp\models\varphi_\pp$ for every $\pp\in T$, then $\O_{K,S}\models\varphi$.
\end{theorem}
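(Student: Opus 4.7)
The plan is to glue the local witnesses into a single global solution by weak approximation in $K$ with respect to the finite set $T$. For each $\pp\in T$ I would fix a witness $\overline y^{(\pp)}\in \mathbf{K}_\pp^m$ for $\varphi_\pp$. The key observation is that every non-linear atomic clause appearing in $\varphi_\pp$ is an \emph{open} condition in the $\pp$-adic topology: nonvanishing $l(\cdot)\neq 0$, strict order comparisons $\ord_\pp(l_1)>\ord_\pp(l_2)$, $\pp$-integrality $\ord_\pp(\cdot)\ge 0$, and the non-strict comparison $\ord_\pp(l_1)\le\ord_\pp(l_2)$ (paired with the requirement $l_2\neq 0$) are all preserved under sufficiently close approximation, because $\ord_\pp$ is locally constant at nonzero elements. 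Hence for each $\pp\in T$ I can choose a precision $N_\pp\in\mathbb{N}$ such that any $\overline y\in \mathbf{K}_\pp^m$ with $\overline y\equiv \overline y^{(\pp)}\pmod{\pp^{N_\pp}}$ componentwise is still a witness for $\varphi_\pp$.

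Next, I would invoke weak approximation for the global field $K$ with respect to $T$ to produce $\overline y\in K^m$ lying componentwise within $\pp^{N_\pp}$ of $\overline y^{(\pp)}$ at every $\pp\in T$ simultaneously, and set $\overline x=A\overline y+\overline c$. Since $A$ and $\overline c$ have entries in $F\subseteq K$, we have $\overline x\in K^n$, and by the defining property of the pair $(A,\overline c)$ returned by $\texttt{Lin}(\varphi)$, $\overline x$ satisfies every linear equation $l(\overline x)=0$ in $L_\varphi$. The integrality clause $\ord_\pp(A\overline y+\overline c)\ge 0$ in each $\varphi_\pp$ combined with the approximation gives $\ord_\pp(\overline x)\ge 0$ for every $\pp\in T$, so $\overline x\in \O_{K,S}^n$.

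It then remains to verify the non-linear clauses of $\overline\varphi$ at $\overline x$. A divisibility $l_1(\overline x)\,\|\,l_2(\overline x)$ holds in $\O_{K,S}$ exactly when $l_2(\overline x)\neq 0$ and $\ord_\pp(l_1(\overline x))\le\ord_\pp(l_2(\overline x))$ for every $\pp\in T$, and each of these follows from its local counterpart in $\varphi_\pp$ and the choice of $N_\pp$. Global nonvanishing $l(\overline x)\neq 0$ follows from any single local nonvanishing $l(A\overline y^{(\pp)}+\overline c)\neq 0$ encoded in $\varphi_\pp$. Finally, the predicates $o_j(l_1(\overline x),l_2(\overline x))$ were translated, by construction, into exactly their semantic content at $\pp_j$, so they are inherited directly from $\varphi_{\pp_j}$.

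The only substantive point --- and the main place where care is required --- is bookkeeping of the precisions $N_\pp$: they must be chosen large enough to absorb the (finitely many) $\pp$-adic denominators appearing in the entries of $A$ and $\overline c$, and to simultaneously preserve all the strict order comparisons and nonvanishings that occur in $\varphi_\pp$. Since $T$ is finite and each $\varphi_\pp$ involves only finitely many atomic formulas, a single integer $N_\pp$ suffices at each $\pp$, and weak approximation then delivers the desired global $\overline y$.
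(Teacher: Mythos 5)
Your proposal is correct and follows essentially the same strategy as the paper's proof: fix local witnesses $\overline y_\pp$ in each $K_\pp$, observe that the non-linear clauses of $\varphi_\pp$ are open conditions at the witness (so each is stable under sufficiently close $\pp$-adic approximation), then invoke Artin--Whaples/weak approximation for the finite set $T$ to produce a global $\overline y\in K^m$ whose image $\overline x = A\overline y + \overline c$ lands in $\O_{K,S}$ and inherits every clause from the local witnesses. The paper's proof merely makes explicit the case-by-case choice of precisions $n_{\pp,i}$ for each type of atomic formula, which is exactly the bookkeeping you flag as the only substantive point.
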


\begin{proof}
Write $\varphi$ as
\[
\exists\overline x\bigwedge \varphi_i(\overline x)
\]
in normal form, and write each $\varphi_\pp$ as
\[
\exists\overline y\left( \bigwedge \varphi_{\pp,i}(\overline y) \wedge \ord(A\overline y+\overline c)\ge 0\right)
\]
as detailed before.

For each $\pp\in T$, let $\overline y_\pp$ be a tuple of elements of $K_\pp$ such that $K_\pp\models \varphi_{\pp,i}(\overline y_\pp)$ for every index $i$. We also choose integers $n_\pp$ in the following way\,:
\begin{enumerate}[(i)]
\item If $\varphi_{\pp,i}$ is of the form
\[
\exists z_{i_1} \exists z_{i_2} \left(l_1(A\overline y+\overline c)\cdot z_{i_1}=l_2(A\overline y+\overline c)\wedge \ord(z_{i_1})\ge 0\wedge l_2(A\overline y+\overline c)\cdot z_{i_2}=1\right),
\]
let $M$ be the maximum of $\{\ord_\pp \left(l_1(A\overline y_\pp+\overline c)\right),\ord_\pp\left(l_2(A\overline y_\pp+\overline c)\right)\}$. By continuity, there exists an integer $n$ such that if $\ord_\pp(\overline y-\overline y_\pp)>n$ then 
$$
\ord_\pp\left((l_1(A\overline y+\overline c)-l_1(A\overline y_\pp+\overline c) \right)>M
$$ 
and 
$$
\ord_\pp\left((l_2(A\overline y+\overline c)-(l_2(A\overline y_\pp+\overline c) \right)>M.
$$ 
Set $n_{\pp,i}$ to be such $n$.

\item If $\varphi_{\pp,i}$ is $0=0$, pick $n_{\pp,i}$ equal to $1$.

\item If $\varphi_{\pp,i}$ is of the form
\[
\exists z_i\left(l_1(A\overline y+\overline c)\cdot z_i=1\right),
\]
we pick, by continuity, $n_{\pp,i}$ such that $\ord_\pp(\overline y-\overline y_\pp)> n_{\pp,i}$ implies 
$$
\ord_\pp\left(l_1(A\overline y+\overline c)-l_1(A\overline y_\pp+\overline c) \right)>\ord_\pp \left(l_1(A\overline y_\pp+\overline c)\right).
$$

\item If $\varphi_{\pp,i}$ is of the form
\[
\ord(l_1(A\overline y+\overline c))>\ord(l_2(A\overline y+\overline c)),
\]
we choose by continuity an integer $n_{\pp,i}$ such that if $\ord_\pp(\overline y-\overline y_\pp)>n_{\pp,i}$ then both $\ord_\pp\left((l_1(A\overline y+\overline c)-l_1(A\overline y_\pp+\overline c) \right)$ and $\ord_\pp\left((l_2(A\overline y+\overline c)-(l_2(A\overline y_\pp+\overline c) \right)$ are greater than the maximum of $\ord_\pp \left(l_1(A\overline y_\pp+\overline c)\right)$ and $\ord_\pp\left(l_2(A\overline y_\pp+\overline c)\right)$.

\item Pick, by continuity, an integer $N_\pp$ such that $\ord_\pp(\overline y-\overline y_\pp)>N_\pp$ implies $\ord_\pp(A\overline y-A\overline y_\pp)\ge 0$.

\item Let $n_\pp$ be the maximum amongst $N_\pp$ and the $n_{\pp,i}$.
\end{enumerate}

By the Artin-Whaples Approximation Theorem, there exists a tuple $\overline y_0$ of elements of $K$ such that, for each $\pp\in T$, $\ord_\pp(\overline y_0-\overline y_\pp)>n_\pp$. Write $\overline x_0=A\overline y_0+\overline c$. We claim that a) every entry of $\overline x_0$ is in $\O_{K,S}$ and b) for every $i$, $\O_{K,S}\models\varphi_i(\overline x_0)$.

For the first claim\,: for each $\pp\in S$, by definition of $n_\pp$ we have 
$$
\ord_\pp\left(\overline x_0-(A\overline y_\pp+\overline c)\right)\ge 0
$$ and, since $K_\pp\models \varphi_\pp(\overline y_\pp)$,
$$
\ord_\pp(A\overline y_\pp+\overline c)\ge 0.
$$ 
Hence $\ord_\pp(\overline x_0)\ge 0$. As this holds for every prime in $S$, the entries of $\overline x_0$ are all in $\O_{K,S}$.

For the second claim\,:
\begin{enumerate}[(i)]
\item Recall that if $\varphi_i$ is of the form $l_1(\overline x)\| l_2(\overline x)$, then each $\varphi_{\pp,i}$ is of the form
\[
\exists z_{i_1} \exists z_{i_2} \left(l_1(A\overline y+ \overline c)\cdot z_{i_1}=l_2(A\overline y+\overline c)\wedge \ord(z_{i_1})\ge 0\wedge l_2(A\overline y+ \overline c)\cdot z_{i_2}=1\right).
\]
By the definition of each $n_\pp$, we have $\ord_\pp(l_1(\overline x_0))=\ord_\pp(l_1(A\overline y_\pp+\overline c))$ and $\ord_\pp(l_2(\overline x_0))=\ord_\pp(l_2(A\overline y_\pp+\overline c))$. Hence $\ord_\pp(l_1(\overline x_0))\le \ord_\pp(l_2(\overline x_0))$ and $l_2(\overline x_0)\neq 0$, and thus $\O_{K,S}\models l_1(\overline x_0)\| l_2(\overline x_0)$.

\item If $\varphi_i$ is of the form $l_1(\overline x)=0$, by construction $l_1(A\overline y+\overline c)=0$ for any $\overline y$ in $K$, in particular it holds for $\overline y_0$.

\item If $\varphi_i$ is of the form $l_1(\overline x)\neq 0$, then $\varphi_{\pp,i}$ is
\[
\exists z_i\left(l_1(A\overline y+\overline c)\cdot z_i=1\right),
\]
and as $\ord_\pp(\overline y_0-\overline y_\pp)>n_\pp$ we have $\ord_\pp(l_1(\overline x_0))=\ord(l_1(A\overline y_\pp+\overline c))<\infty$, and thus $\O_{K,S}\models l_1(\overline x_0)\neq 0$.

\item If $\varphi_i$ is of the form $o_j(l_1(\overline x),l_2(\overline x))$, then $\varphi_{\pp_j,i}$ is
\[
\ord(l_1(A\overline y+\overline c))>\ord(l_2(A\overline y+\overline c)).
\]
As in the first item of this list, as $\ord_{\pp_j}(\overline y_0-\overline y_{\pp_j})>n_{\pp_j,i}$ we have
\[
\ord_{\pp_j}(l_1(\overline x_0))=\ord_{\pp_j}(l_1(A\overline y_{\pp_j}+\overline c))>\ord_{\pp_j}(l_2(A\overline y_{\pp_j}+\overline c))=\ord_{\pp_j}(l_2(\overline x_0)).
\]
\end{enumerate}
\end{proof}


\section{Undecidability results}\label{undres}

In  this section we deal with the dual case where $S$ is a finite set of places containing the primes at infinity, and $K$ is a finite separable extension of $F=\mathbb{F}_p(t)$, where $p$ is any rational prime distinct from $2$. 

We start with fixing some notation. 
\begin{itemize}
\item $\mathcal {M}_{S}= (\O_{K,S}, 0,1, +, |, \cdot t )$.
\item $O^*_{K,S}=O_{K,S}\setminus\{0\}$.
\item $O^\times_{K,S}$ is the group of units of $\O_{K,S}$.
\item $S_{\rm fin}$ is the set of finite primes of $S$ and $s_{\rm f}$ is the cardinal of $S_{\rm fin}$. 
\end{itemize}

\subsection{Definability of ``to be distinct''}

We start by proving that the relation ``different from $0$'' is positive-existentially definable in $ \O_{K,S}$.  In order to do this, we need to recall some results and a few definitions.

\begin{definition}
Let $M_K^\star\subseteq M_K$ denote the set of all non-archimedean places of $K$, and let  $\Sigma\subseteq M_K^\star$. The \emph{Dirichlet density} of $\Sigma$ is
$$
\delta_{\rm Dir}(\Sigma)=\lim_{s\to 1^+}\frac{\sum_{\pp\in\Sigma} q_\pp^{-s}}{ \sum_{\pp\in M_K^\star} q_\pp^{-s}},
$$
if this limit exists, where $q_\pp$ is the size of the residue field at $\pp$.
\end{definition}

Note that if $\Sigma_1$ and $\Sigma_2$ are disjoint and do have Dirichlet density, then the density of the union is just the sum of the densities. We could not find a proper reference for the following easy lemma (see for instance \cite[Ex. 2.3]{Con04}). 

\begin{lemma}\label{lemmaconrad}
Let $K/F$ be a finite separable extension of global fields. The set $\Sigma$ of non-archimedean places $\pp$ on $K$ unramified over $F$, and such that the residue field degree $f(\pp)=[k(\pp)\colon k(P)]$ is $1$ (where $P$ is the prime below $\pp$), has Dirichlet density $1$.
\end{lemma}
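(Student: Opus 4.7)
The plan is to show instead that the complementary set $M_K^\star \setminus \Sigma$ has Dirichlet density $0$, which together with additivity on disjoint sets immediately yields $\delta_{\rm Dir}(\Sigma)=1$. Since the Dirichlet density is expressed as a ratio, I first recall that the denominator
\[
D(s)=\sum_{\pp\in M_K^\star}q_\pp^{-s}
\]
tends to $+\infty$ as $s\to 1^+$; indeed, this follows from the standard fact that $\zeta_K(s)$ has a simple pole at $s=1$, so that $D(s)\sim\log\tfrac{1}{s-1}$. It therefore suffices to show that the numerator corresponding to the complement stays bounded as $s\to 1^+$.

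Decompose $M_K^\star\setminus\Sigma = R\cup T$, where $R$ is the set of primes of $K$ that ramify over $F$ and $T$ is the set of unramified primes with residue degree $f(\pp)\ge 2$. The set $R$ is finite because $K/F$ is separable (only the primes dividing the different $\mathfrak{D}_{K/F}$ ramify), so its contribution $\sum_{\pp\in R}q_\pp^{-s}$ is a finite sum, hence bounded. For $T$, set $n=[K:F]$. Above every prime $P$ of $F$ there lie at most $n$ primes of $K$, and any $\pp\in T$ lying over $P$ satisfies $q_\pp=q_P^{f(\pp)}\ge q_P^2$. Consequently
\[
\sum_{\pp\in T}q_\pp^{-s}\;\le\;n\sum_{P\in M_F^\star}q_P^{-2s}.
\]

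The last sum is bounded as $s\to 1^+$: for any $s\ge 1/2$ it is dominated by $\log\zeta_F(2s)$ (up to elementary terms coming from the $k\ge 2$ contributions in the Euler product expansion), and $\zeta_F(2)<\infty$. Therefore $\sum_{\pp\in M_K^\star\setminus\Sigma}q_\pp^{-s}=O(1)$ while $D(s)\to\infty$, giving $\delta_{\rm Dir}(M_K^\star\setminus\Sigma)=0$ and hence $\delta_{\rm Dir}(\Sigma)=1$.

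I do not expect a serious obstacle here; the only subtle point is justifying the asymptotics of $D(s)$ and the convergence of $\sum_P q_P^{-2s}$ near $s=1$, both of which are consequences of the analytic properties of the Dedekind zeta function of a global field (equally valid in the number field and function field cases). If one wishes to avoid invoking $\zeta_K$ directly, one can alternatively deduce the same bound from the prime ideal theorem/analogue of PNT, but the Euler product argument above is the shortest route.
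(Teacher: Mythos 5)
The paper states this lemma without a proof, citing only \cite[Ex.\ 2.3]{Con04}. Your argument is correct and is the standard one: split the complement into the finite set of ramified primes (separability makes the different a nonzero ideal, so only finitely many primes ramify) and the unramified primes of residue degree at least $2$, whose contribution is bounded by $n\sum_{P}q_P^{-2s}$ and remains bounded as $s\to 1^+$, while the denominator $\sum_{\pp}q_\pp^{-s}$ diverges owing to the simple pole of $\zeta_K$ at $s=1$; this works uniformly in the number field and function field cases. One minor simplification: for $s>1$ one has directly $\sum_{P}q_P^{-2s}\le\sum_{P}q_P^{-2}<\infty$, so the detour through the Euler product of $\zeta_F(2s)$ is not needed.
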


If $W$ is a finitely generated subgroup of $K^\times$, of (torsion-free) rank $r\geq 1$, and $\pp$ is a non-archimedean prime, we let $\psi_\pp\colon W\rightarrow k(\pp)^\times$ denote the quotient map to the unit subgroup of the residue field, and $\Sigma(K,W)$ the set of primes $\mathfrak p$ such that:
 \begin{enumerate}
 \item $\ord_{\mathfrak p}(w)=0$ for all $w\in W$, and
 \item the index of $\psi_\pp(W)$ in  $k(\pp)^\times$ divides $2$.
 \end{enumerate}     

In \cite[Theorem 3.1]{Len77}, Lenstra proves in particular: 

\begin{theorem}[Lenstra]\label{thmlenstra}
If $K$ is a function field in one variable over a finite field then the set $\Sigma(K,W)$ has Dirichlet density strictly less than $1$. 
\end{theorem}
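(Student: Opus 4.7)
The plan is to produce a set of primes of $K$ with positive Dirichlet density at which the index $[k(\pp)^\times:\psi_\pp(W)]$ is at least $3$; such primes are then automatically excluded from $\Sigma(K,W)$. We obtain them via splitting behavior in a Kummer extension and Chebotarev's density theorem for global function fields.

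First I would fix a rational prime $\ell\geq 3$ with $\ell\neq p$ and $\ell\nmid q-1$, where $\mathbb{F}_q$ denotes the (finite) field of constants of $K$. Such $\ell$ exists since only finitely many primes divide $q-1$ or equal $p$. Because all roots of unity in a global function field lie in its constant field, the hypothesis $\ell\nmid q-1$ forces $\zeta_\ell\notin K$, and in particular $K(\zeta_\ell)\supsetneq K$. Fixing generators $w_1,\dots,w_r$ of $W$, I would form the Kummer extension
\[
L=K\bigl(\zeta_\ell,\,w_1^{1/\ell},\dots,w_r^{1/\ell}\bigr),
\]
which is a finite Galois separable extension of $K$ (using $\ell\neq p$) with $[L:K]\geq[K(\zeta_\ell):K]\geq 2$.

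Next I would apply Chebotarev's density theorem to conclude that the set of primes $\pp$ of $K$ which are unramified in $L/K$ and split completely in $L$ has Dirichlet density $1/[L:K]>0$. For any such $\pp$ that is additionally coprime to all the $w_i$ (excluding only finitely many primes), two facts hold: (i) $\zeta_\ell\in k(\pp)$, so $\ell\mid q_\pp-1$ and the subgroup of $\ell$-th powers $k(\pp)^{\times\ell}$ has index exactly $\ell$ in the cyclic group $k(\pp)^\times$, and (ii) each $\psi_\pp(w_i)$ is an $\ell$-th power in $k(\pp)^\times$, hence $\psi_\pp(W)\subseteq k(\pp)^{\times\ell}$. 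Combining these, $[k(\pp)^\times:\psi_\pp(W)]\geq\ell\geq 3$, so $\pp\notin\Sigma(K,W)$. This yields $\delta_{\mathrm{Dir}}(\Sigma(K,W))\leq 1-1/[L:K]<1$.

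The main subtlety is ensuring that the Kummer extension $L$ is genuinely a proper extension of $K$: in positive characteristic one must avoid the residue characteristic, and the constant field already contains many roots of unity, so one cannot blindly invoke Kummer theory. Choosing $\ell$ coprime to both $p$ and $q-1$ bypasses this cleanly by making the cyclotomic piece alone contribute nontrivially to the degree, so that one does not need to track the finer independence of the Kummer classes of the $w_i$ in $K^\times/K^{\times\ell}$. This qualitative route suffices for the conclusion ``strictly less than $1$''; a finer analysis exploiting the rank hypothesis on $W$ would give a sharper density bound but is not needed here.
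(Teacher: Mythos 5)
The paper does not include a proof of this statement; it is quoted from Lenstra \cite[Theorem 3.1]{Len77}, whose argument also proceeds via Kummer extensions and density theorems but establishes a precise density formula rather than a one-sided bound. Your proof is correct and self-contained, and your choice of $\ell$ coprime to both $p$ and $q-1$ is a nice shortcut: it makes the cyclotomic layer $K(\zeta_\ell)/K$ already a nontrivial constant extension (of degree equal to the multiplicative order of $q$ modulo $\ell$, which is at least $2$), so you never have to address whether $w_1,\dots,w_r$ are multiplicatively independent in $K^\times/K^{\times\ell}$, the delicate point in Lenstra's sharper computation. For the same reason, the rank hypothesis $r\geq 1$ on $W$ plays no role in your argument. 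Two minor points worth stating explicitly: Chebotarev's density theorem does hold for global function fields and applies without difficulty to extensions with a constant-field component (primes splitting completely in a constant extension of degree $d$ are those of degree divisible by $d$, with the expected density $1/d$); and, as written, your argument bounds the \emph{upper} Dirichlet density of $\Sigma(K,W)$ by $1-1/[L:K]<1$ without establishing that the limit defining $\delta_{\mathrm{Dir}}(\Sigma(K,W))$ exists --- but the positive lower density of the complement is exactly what Lemma \ref{lemlenstra} uses, so this suffices for the paper's purposes.
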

 
\begin{lemma}\label{lemlenstra} There exist an irreducible polynomial  $q\in \mathbb F_p[t]$ and a non-zero polynomial $b\in \mathbb F_p[t]$ of degree less than the degree of $q$, such that $qx+b$  is never a unit in $ \O_{K,S}$ as $x$ varies over $ \O_{K,S}$.
\end{lemma}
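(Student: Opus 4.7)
The plan is to apply Theorem \ref{thmlenstra} to the $S$-unit group $W = \mathcal{O}_{K,S}^\times$. By the $S$-unit theorem, $W$ has torsion-free rank $|S|-1 \geq 1$ (using the standing hypothesis $|S|\ge 2$ of Theorem \ref{mainund}), so Lenstra's theorem applies. The strategy is to find an auxiliary prime $\mathfrak{p}\notin S$ satisfying: (i) $\mathfrak{p}$ is unramified over $F=\mathbb{F}_p(t)$ with residue degree $f(\mathfrak{p})=1$, so that $k(\mathfrak{p})\cong \mathbb{F}_p[t]/(q)$ where $q$ is the monic irreducible of $\mathbb{F}_p[t]$ underlying $\mathfrak{p}$; and (ii) the reduction $\psi_\mathfrak{p}(W)$ is a proper subgroup of $k(\mathfrak{p})^\times$. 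Given such $\mathfrak{p}$, take $q$ as above and pick $b\in \mathbb{F}_p[t]$ to be any nonzero polynomial of degree less than $\deg q$ whose residue modulo $q$ does not lie in $\psi_\mathfrak{p}(W)$; such $b$ exists by (ii) and the fact that each class in $k(\mathfrak{p})\cong \mathbb{F}_p[t]/(q)$ has a unique representative of degree less than $\deg q$.

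The existence of $\mathfrak{p}$ is a density count. Let $A$ be the set of non-archimedean primes of $K$ satisfying (i); by Lemma \ref{lemmaconrad}, $\delta_{\rm Dir}(A)=1$. Since $S$ is finite it has density $0$, so $A\setminus S$ still has density $1$. By Theorem \ref{thmlenstra}, $\delta_{\rm Dir}(\Sigma(K,W))<1$, so $A\setminus S$ cannot be contained in $\Sigma(K,W)$. Any $\mathfrak{p}\in (A\setminus S)\setminus \Sigma(K,W)$ has index $[k(\mathfrak{p})^\times:\psi_\mathfrak{p}(W)]$ not dividing $2$, in particular the index exceeds $1$, delivering (ii). Note that because $S$ contains the places at infinity, $\mathfrak{p}\notin S$ automatically guarantees that $\mathfrak{p}$ lies above a finite prime of $\mathbb{F}_p[t]$, so $q$ is well-defined.

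To verify the lemma, fix any $x\in \mathcal{O}_{K,S}$. Since $\mathfrak{p}\notin S$, we have $\ord_\mathfrak{p}(x)\ge 0$, and since $\mathfrak{p}$ lies above the ideal $(q)$, $\ord_\mathfrak{p}(q)\ge 1$, so $\ord_\mathfrak{p}(qx)\ge 1$. On the other hand, $b\not\equiv 0\pmod q$ together with $f(\mathfrak{p})=1$ forces $\ord_\mathfrak{p}(b)=0$. The ultrametric inequality gives $\ord_\mathfrak{p}(qx+b)=0$ and $qx+b\equiv b\pmod{\mathfrak{p}}$. Were $qx+b$ a unit of $\mathcal{O}_{K,S}$, i.e.\ an element of $W$, the reduction $\psi_\mathfrak{p}(qx+b)$ would put the class of $b$ into $\psi_\mathfrak{p}(W)$, contradicting the choice of $b$.

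The only nonroutine ingredient is Theorem \ref{thmlenstra}; the conceptual step is to recognize that $W=\mathcal{O}_{K,S}^\times$ is exactly the group whose reduction must be controlled, and that Lenstra's bound on the index $[k(\mathfrak{p})^\times:\psi_\mathfrak{p}(W)]$ provides the freedom to choose a coset representative $b$ on which no perturbation of the form $qx$ can act modulo $\mathfrak{p}$.
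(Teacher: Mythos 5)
Your proof is correct and follows essentially the same route as the paper's: apply Lemma~\ref{lemmaconrad} and Theorem~\ref{thmlenstra} with $W=\O_{K,S}^\times$ to extract a degree-one prime $\pp\notin S$ with $\psi_\pp(W)\subsetneq k(\pp)^\times$, then take $q$ the irreducible below $\pp$ and $b$ a representative of a missing residue class. The only difference is cosmetic: you spell out the final verification ($\ord_\pp(qx+b)=0$ and $qx+b\equiv b\pmod\pp$) that the paper leaves implicit, and you invoke slightly stronger hypotheses than needed in a couple of places (e.g.\ $\ord_\pp(b)=0$ needs only that $q\nmid b$ and $\pp\mid q$, not $f(\pp)=1$).
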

\proof Let $W$ be the group of units $\O_{K,S}^\times$, and let $\Sigma$ be the set of primes $\pp$ in $M_K$ such that $f(\pp)=1$. Since the density of $\Sigma$ is $1$ and that of the complement of $\Sigma(K,W)\cup S_{\rm fin}$ is positive, their intersection is not empty. Let $\pp\in\Sigma\setminus (\Sigma(K,W)\cup S_{\rm fin})$. So in particular we have $f(\pp)=1$. Choose $q$ to be the monic irreducible polynomial generating the prime below $\pp$. Since $f(\pp)=1$, we have $k(\pp)^\times=k(q)^\times$. Since $\mathfrak p\notin \Sigma(K,W)\cup S_{\rm fin}$, the index of $\psi(W)$ in $ k({\mathfrak p})^\times$ does not divide $2$. In particular $\psi(W)\subsetneq k(q)^\times$. Let $b$ be any polynomial of degree less than the degree of $q$ whose residue class modulo $q$ lies in $k(q)^\times\setminus \psi(W)$. It follows from the construction  that  $q$ and $b$ are as required. 
\endproof
 

The following theorem can be found in \cite[Thm. 4.7]{R02}.

\begin{theorem}[Kornblum] Let $a,b\in \mathbb F_p[t]$ be two relatively prime polynomials. If $a$ has positive degree, then the set 
$$
\Gamma=\{\,q\in \mathbb F_p[t]\colon  q\equiv b\pmod a,\ q \ \rm{is\ irreducible}\,\}
$$
has positive Dirichlet density. In particular, $\Gamma$ is infinite.   
\end{theorem}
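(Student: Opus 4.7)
The plan is to adapt the classical proof of Dirichlet's theorem on primes in arithmetic progressions to the polynomial ring $\mathbb{F}_p[t]$, working with $L$-functions in the variable $u=p^{-s}$. Let $\Phi(a)=\#(\mathbb{F}_p[t]/a)^\times$ and, for each Dirichlet character $\chi\colon(\mathbb{F}_p[t]/a)^\times\to\mathbb{C}^\times$ extended by zero off units, I would define
\[
L(s,\chi)=\sum_{f\text{ monic}}\frac{\chi(f)}{|f|^s}=\prod_{q\text{ monic irred.}}\left(1-\frac{\chi(q)}{|q|^s}\right)^{-1},
\]
where $|f|=p^{\deg f}$. Taking logarithms and absorbing the $O(1)$ contribution of higher prime powers as $s\to 1^+$ yields
\[
\log L(s,\chi)=\sum_{q\text{ monic irred.}}\frac{\chi(q)}{|q|^s}+O(1).
\]

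Next, I would apply the orthogonality relation $\frac{1}{\Phi(a)}\sum_{\chi}\overline{\chi(b)}\chi(q)$, which equals $1$ when $q\equiv b\pmod{a}$ and $0$ otherwise (for $q$ coprime to $a$). Weighting the previous identity by $\overline{\chi(b)}$ and summing over $\chi$ gives
\[
\sum_{\substack{q\text{ irred.}\\ q\equiv b\,(\mathrm{mod}\ a)}}\frac{1}{|q|^s}=\frac{1}{\Phi(a)}\sum_{\chi}\overline{\chi(b)}\log L(s,\chi)+O(1).
\]
The trivial character produces $\log L(s,\chi_0)=\log\frac{1}{1-p^{1-s}}+O(1)$, which diverges like $\log\frac{1}{s-1}$ as $s\to 1^+$. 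If every non-trivial $\log L(s,\chi)$ remains bounded, dividing by the asymptotic $\sum_{q\text{ irred.}}|q|^{-s}\sim\log\frac{1}{s-1}$ yields Dirichlet density $1/\Phi(a)>0$, and infinitude of $\Gamma$ is immediate.

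The main obstacle is then the non-vanishing $L(1,\chi)\neq 0$ for every non-trivial $\chi$, which is the hard step of Dirichlet's theorem. The function-field setting is substantially cleaner than the classical one: a counting argument on monic polynomials of a given degree shows that for non-trivial $\chi$, $L(s,\chi)$ is actually a polynomial in $u=p^{-s}$ of degree at most $\deg a - 1$, so holomorphy on the whole plane is automatic and only the single potential zero at $u=p^{-1}$ must be excluded. I would proceed in the standard Dirichlet way by exploiting the factorization $\prod_{\chi}L(s,\chi)$, whose pole structure comes entirely from the trivial character and hence is a simple pole at $s=1$: complex non-trivial $\chi$ paired with $\overline\chi$ cannot both vanish at $s=1$ without forcing a double zero there, and a real non-trivial $\chi$ with $L(1,\chi)=0$ is excluded by the classical Landau-type positivity argument applied to the product Dirichlet series (whose Dirichlet coefficients are non-negative). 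This closes the loop and delivers both the positive density and the infinitude of $\Gamma$.
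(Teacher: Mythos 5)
The paper does not prove this theorem; it cites it to Rosen (\emph{Number theory in function fields}, Theorem~4.7), so there is no in-paper proof to compare against. Your sketch --- Dirichlet characters mod $a$, $L$-function orthogonality, the key function-field simplification that $L(s,\chi)$ is a polynomial of degree $\le \deg a - 1$ in $u=p^{-s}$ for non-trivial $\chi$, and non-vanishing at $u=p^{-1}$ via the pole of $\prod_\chi L(s,\chi)$ together with a Landau-type positivity argument for real characters --- is precisely the standard argument carried out in the cited source, and it is correct.
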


\begin{lemma}\label{dif}
The relation $\neq$ is positive-existentially definable in the structure $\mathcal M_S$.
\end{lemma}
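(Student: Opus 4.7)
The plan is to construct a positive existential formula $\varphi(y)$ in the language of $\MM_S$ such that $\MM_S \models \varphi(y)$ if and only if $y \neq 0$, using Lemma \ref{lemlenstra} and Kornblum's theorem as the main tools. First, I would fix $q, b \in \F_p[t]$ as supplied by Lemma \ref{lemlenstra}: $q$ is irreducible, the prime $\pp$ of $\O_K$ above $q$ lies outside $S_{\rm fin}$ and has residue degree $1$, $b$ is nonzero of degree less than $\deg q$, and $qx+b$ is never a unit in $\O_{K,S}$. A short check using the coprimality of $q$ and $b$ in $\F_p[t]$ shows that $qx+b$ is also never zero on $\O_{K,S}$: if $qx+b=0$ then $x = -b/q$ satisfies $\ord_\pp(x) = \ord_\pp(b) - \ord_\pp(q) = 0 - \ord_\pp(q) < 0$, contradicting $x \in \O_{K,S}$. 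Hence for every $y \in \O_{K,S}$, the element $qy+b$ is a nonzero non-unit of $\O_{K,S}$ and therefore admits at least one prime divisor $\Pp \notin S_{\rm fin}$.

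The heart of the argument is the following dichotomy: if $y = 0$ then $qy+b = b$, so the only primes $\Pp \notin S_{\rm fin}$ that can divide $qy+b$ are the (finitely many) primes of $\O_K$ lying above the irreducible factors of $b$ in $\F_p[t]$; whereas if $y \neq 0$, Kornblum's theorem supplies infinitely many irreducibles $r \equiv b \pmod q$, and setting $c = (r-b)/q \in \F_p[t]$ one has $r \mid qy+b \iff r \mid q(y-c) \iff r \mid y - c$, using $\gcd(r,q)=1$. The positive existential formula should therefore detect that $qy+b$ admits a prime divisor distinct from the primes of $b$. I would aim for a formula of the shape
\[
(y \mid 1) \;\vee\; \exists z_1, \dots, z_k \; \bigwedge_i \psi_i(y, z_1, \dots, z_k),
\]
where the first disjunct captures units (for which $y \neq 0$ trivially) and the second produces, for a nonzero non-unit $y$, witnesses realizing a prime divisor of $y$ (or of $qy+b$) different from the primes of $b$, expressed through divisibilities among $\F_p[t]$-linear combinations involving terms of the form $qz_j+b$.

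The main obstacle is to design the atomic conditions inside the existential so that (i) they are satisfiable for \emph{every} nonzero $y$, including the awkward cases where $y$ itself divides $b$ or shares primes with $q$, and (ii) they are \emph{never} satisfiable when $y = 0$. This will require a Chinese Remainder Theorem argument in the Dedekind ring $\O_{K,S}$ to solve the relevant congruences on the witnesses, together with a careful use of the infinitude given by Kornblum to avoid collisions with the fixed primes dividing $b$. Producing both directions simultaneously in a positive existential formula --- with no use of $\neq$ or negated divisibility --- is what makes this step technical and justifies the appeal to Lemma \ref{lemlenstra} (and, through it, to Lenstra's theorem) rather than to a softer counting argument.
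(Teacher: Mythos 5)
Your plan never produces an actual formula, and the dichotomy you aim for is both unnecessary and not obviously true. You propose to decide $y\neq 0$ by detecting whether $qy+b$ has a prime divisor distinct from the primes of $b$, but (a) for a nonzero $y$ there is no reason $qy+b$ must acquire a prime outside the support of $b$ (you neither prove nor need this), and (b) you apply the shift $q\cdot\,+b$ to the element $y$ under test, whereas the whole point of the witness variable is to apply it to a \emph{free} variable $x$ that can be tuned by Kornblum's theorem relative to the \emph{fixed} $y$. By substituting $y$ in place of the witness you lose exactly the flexibility the argument requires. The paper's formula is
\[
\psi_{\neq}(y)\colon \exists A,B,x\ \bigl(y\mid A\ \wedge\ qx+b\mid B\ \wedge\ A+B=1\bigr),
\]
which in $\O_{K,S}$ says ``$\exists r,s,x$ with $ry+s(qx+b)=1$'', i.e. the ideals $(y)$ and $(qx+b)$ are comaximal for some $x$. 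If $y=0$, then $y\mid A$ forces $A=0$, hence $B=1$ and $qx+b$ must be a unit, contradicting Lemma~\ref{lemlenstra}. If $y\neq 0$ (WLOG $y\in\O_K$), Kornblum produces an $x$ making $qx+b$ irreducible and coprime to the primes below those dividing $y$, so $(y)$ and $(qx+b)$ are comaximal in $\O_K$ and Bézout witnesses $r,s$ exist. No disjunction, no CRT on the witnesses, no case analysis on primes of $b$ is needed. Your setup identifies the right ingredients (Lemma~\ref{lemlenstra}, Kornblum, the coprimality/comaximality theme), but the crucial step --- encoding coprimality of $y$ and $qx+b$ as the single divisibility-plus-addition condition $ry+s(qx+b)=1$ --- is missing, and your replacement of the witness $x$ by $y$ itself would need a new argument that you do not supply.
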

\proof
Let $q$ and $b$ be given by Lemma \ref{lemlenstra}. The formula
$$
\psi_{\neq}(y):\exists A,B,x(y\mid A\wedge qx+b\mid  B\wedge A+B=1)
$$
positive-existentially defines the relation ``$y\neq 0$'' in $\mathcal{M}_S$.

First note that the formula $\psi_{\neq}(y)$ translates to  ``There exist $r,s,x\in \O_{K,S}$ such that $ry+s(qx+b)=1$'' in $\mathcal M_S$.   

If  $y=0$, then the formula is false, since by Lemma \ref{lemlenstra}, $qx+b$ is never a unit in $\O_{K,S}$. 

For the other implication suppose $y\neq 0$. We may assume that $y\in \O_K$ (otherwise, multiply the $r$ that we will find by the denominator of $y$).  Let $\pp_1^{k_1}\cdots \pp_m^{k_m}$ be the prime factorization  of the ideal $y\O_{K}$, and let $P_1,\dots, P_m$ be the primes below $\pp_1,\dots, \pp_m,$ respectively.   Since $q$ and $b$ are relatively prime, by Kornblum's  theorem there exists $x$ such that $qx+b$ is irreducible, and coprime with  $P_1,\dots, P_m,$. It follows that the ideals $y\O_K$ and $(qx+b)\O_K$ are comaximal. Hence, there are $r,s \in \O_K$ so that $ry+s(qx+b)=1$ as required.     \endproof

\subsection{Defining the square function restricted to units}

In this section we show that the square function restricted to units is positive-existianlly definable in $\mathcal{M}_S$. In order to do this, we need a few more definitions and notation. 
\begin{itemize}
\item $P_1,\dots, P_k$ are the irreducible elements in $\mathbb F_p[t]$ below the primes of $S_{\rm fin}$.
\item $S_\infty=\{\infty_1,\dots,\infty_{s_\infty}\}$ is the set of infinite primes of $S$ and $s_{\infty}$ is the cardinal of $S_\infty$.
\item $s=s_\infty+s_{\rm f}$. 
\item $c_K$ is the cardinal of the field of constants $C_K$.
\item $r$ is an upper bound for the amount of primes above each $P_i$.
\item $I=\{0,\dots, 2r+1\}^k$.
\item If $\alpha=(\alpha_1,\dots,\alpha_k)\in \mathbb N^k$ is a multi-index, we write $P^\alpha=\prod_{i=1}^kP_i^{\alpha_i}$. 
\item For $i=1,\dots, c_K$ and $j=1,\dots,2s+1$, $q_{i,j}\in\mathbb F_p[t]$ are irreducible elements such that: 
\begin{itemize}
\item $q_{i,j}$ does not lie below any of the primes in $S_{\rm fin}$;
\item for each $j$, all the elements in $\{q_{i,j}\colon i=1,\dots, c_K\}$ have the same degree; 
\item for each $i$, the elements in $\{q_{i,j}\colon j=1,\dots, 2s_\infty+1\}$ have pairwise distinct degrees. 
\end{itemize}
\end{itemize}

\begin{lemma}\label{fakepigeonhole}
For every $x,y \in \O^\times_{K,S}$, there exist a multi-index $\beta\in I$ and an index $j_0$ such that $\ord_\pp(P^\beta x)\ne\ord_\pp q_{i,j_0}$ and $\ord_\pp(P^\beta y)\ne\ord_\pp q_{i,j_0}x$ for every prime $\pp$ in $S$ (including the primes at infinity).
\end{lemma}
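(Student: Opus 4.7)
The plan is to prove the lemma by a two-stage pigeonhole argument, disposing of the finite primes of $S$ first through the choice of $\beta$, and then of the infinite primes through the choice of $j_0$. The crucial preliminary observation is that for every prime $\pp\in S$, the quantity $\ord_\pp q_{i,j_0}$ is independent of $i$\,: if $\pp\in S_{\rm fin}$ lies above some $P_{i_0}$, then $q_{i,j_0}$ is coprime to $P_{i_0}$ by construction, hence $\ord_\pp q_{i,j_0}=0$; if $\pp$ is infinite lying above $\infty$, then $\ord_\pp q_{i,j_0}=-e(\pp/\infty)\,d_{j_0}$, where $d_{j_0}$ is the common degree of the $q_{i,j_0}$. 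In particular, the conclusion holds uniformly in $i$.

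First, I handle the finite primes. For each $\pp\in S_{\rm fin}$ lying above $P_{i_0}$, the other polynomials $P_j$ ($j\neq i_0$) are $\pp$-units, so $\ord_\pp(P^\beta)=\beta_{i_0}\,e(\pp/P_{i_0})$; using $\ord_\pp q_{i,j_0}=0$, the two required inequalities at $\pp$ become $\beta_{i_0}\,e(\pp/P_{i_0})\neq -\ord_\pp x$ and $\beta_{i_0}\,e(\pp/P_{i_0})\neq \ord_\pp x-\ord_\pp y$, forbidding at most two values of $\beta_{i_0}$ independently of $j_0$. Since at most $r$ primes of $K$ lie above $P_{i_0}$, at most $2r$ values of $\beta_{i_0}\in\{0,\dots,2r+1\}$ get excluded, leaving at least two admissible choices. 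Making such a choice independently for each coordinate $i_0\in\{1,\dots,k\}$ yields a $\beta\in I$ satisfying both inequalities at every finite prime of $S$, regardless of the value of $j_0$.

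Next, I handle the infinite primes with this $\beta$ now fixed. For each $\pp\in S_\infty$, the two inequalities translate into $d_{j_0}\neq -(\ord_\pp(P^\beta)+\ord_\pp x)/e(\pp/\infty)$ and $d_{j_0}\neq -(\ord_\pp(P^\beta)+\ord_\pp y-\ord_\pp x)/e(\pp/\infty)$. Because the degrees $d_j$ for $j\in\{1,\dots,2s_\infty+1\}$ are pairwise distinct by construction, each forbidden rational matches at most one admissible index, so each infinite prime rules out at most two indices $j_0$. Summing over the $s_\infty$ infinite primes excludes at most $2s_\infty$ indices from $\{1,\dots,2s_\infty+1\}\subseteq\{1,\dots,2s+1\}$, leaving at least one admissible $j_0$.

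The main obstacle is the bookkeeping in the first step\,: one must observe that the constraints at a finite prime above $P_{i_0}$ depend only on the single coordinate $\beta_{i_0}$, so that the choices can be decoupled across the $k$ coordinates, and that the bound $r$ on the number of primes of $K$ above each $P_{i_0}$ together with the range $\{0,\dots,2r+1\}$ supplies exactly the slack of two admissible values per coordinate. Once this decoupling is in place, the second step is a direct counting argument that leverages the pairwise distinctness of the degrees $d_{j_0}$.
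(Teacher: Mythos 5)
Your proof is correct and follows essentially the same two-stage pigeonhole strategy as the paper: a coordinate-by-coordinate choice of $\beta$ first neutralizes the finite primes of $S$ independently of $j_0$ (each prime above $P_\ell$ forbids at most two values of $\beta_\ell$, and the bound $r$ together with the range $\{0,\dots,2r+1\}$ supplies the slack), after which the $2s_\infty+1$ pairwise distinct degrees leave at least one admissible $j_0$ against the at most $2s_\infty$ indices ruled out by the infinite primes. Your explicit preliminary observation that $\ord_\pp q_{i,j_0}$ does not depend on $i$ is a welcome clarification of why the conclusion holds uniformly in $i$, a point the paper leaves implicit.
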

\proof
Given $x,y \in \O^\times_{K,S}$, for each  $1\leq\ell \leq k$, we choose 
$$
\beta_\ell\in\{1,2,3,\dots,2r+1\}\setminus \left(\left\{\frac{-\ord_\pp(x)}{e_\pp}\colon \pp \dib P_i\right\}\cup\left\{\frac{\ord_\pp(x)-\ord_\pp(y)}{e_\pp}\colon \pp  \dib   P_i\right\}\right),
$$
arbitrarily. Given $\pp\in S_{\rm fin}$, let $P_i$ be the prime below $\pp$. We have
$$
\ord_\pp(P^\beta x)=\ord_\pp(x)+\sum_\ell \beta_\ell\ord_\pp(P_\ell)
=\ord_\pp(x)+\beta_i e_\pp\ne 0, 
$$ 
hence $\ord_\pp(P^\beta x)\ne0$. Similarly, we have $\ord_\pp(P^{\beta}y)\ne \ord_\pp(x) $ for all $\pp\in S_{\rm fin}$. Note that in the worst case, there is just one choice for $\beta$. Since $q_{i,j}$ do not lie below the primes in $S_{\rm fin}$, we have $\ord_\pp q_{i,j}=0$ for all those primes, so in particular $\ord_\pp(P^\beta x)\ne\ord_\pp q_{i,j}$ and $\ord_\pp(P^\beta y)\ne\ord_\pp q_{i,j}x$. 

Let us write $d_j=\ord_\infty(q_{i,j})$ (namely, the opposite of the degree of $q_{i,j}$). Choose $j_0$ to be any index so that $e_{\pp}d_{j_0}$ is distinct from both $\ord_\pp(P^{\beta}x)$ and $\ord_\pp(P^{\beta}y)-\ord_\pp(x)$ for every prime $\pp$ at infinity. This is possible because there are $2s_\infty+1$ possible pairwise distinct degrees among the $d_j$, and there are $s$ primes at infinity. With this choice of $j_0$, for every $i$, we also have $\ord_\pp(P^\beta x)\ne\ord_\pp q_{i,j_0}$ and $\ord_\pp(P^\beta y)\ne\ord_\pp q_{i,j_0}x$ for the primes at infinity.
\endproof

\begin{proposition}\label{squareunits}
The set $\SQ_u=\{(x,y)\colon x,y \in \O^\times_{K,S},\ y=x^2\}$ is positive-existentially definable in $\mathcal{M}_S$.
\end{proposition}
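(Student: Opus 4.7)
My plan is to exhibit a positive-existential definition
\[
\SQ_u(x,y)\colon (x\mid 1)\wedge (y\mid 1)\wedge \bigvee_{(\beta,j_0)}\bigwedge_{i=1}^{c_K}(P^\beta x - q_{i,j_0})\mid (P^\beta y - q_{i,j_0}x),
\]
where $(\beta, j_0)$ ranges over the finite set $I\times\{1,\dots,2s+1\}$. The clauses $x\mid 1$ and $y\mid 1$ force $x$ and $y$ to lie in $\O_{K,S}^\times$; each polynomial expression $P^\beta x - q_{i,j_0}$ and $P^\beta y - q_{i,j_0}x$ is a term in $\LL_{\Div,t}$ built from $+$ and $\cdot t$; and the finiteness of the outer index set keeps the whole formula positive-existential.

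For the forward direction, when $y=x^2$ I would invoke Lemma \ref{fakepigeonhole} to pick $\beta\in I$ and $j_0$ with the stated valuation properties, and witness every divisibility in the corresponding disjunct via the identity
\[
P^\beta y - q_{i,j_0}x = P^\beta x^2 - q_{i,j_0}x = x(P^\beta x - q_{i,j_0}).
\]

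The converse is where the work lies. Reducing modulo $(P^\beta x - q_{i,j_0})$ gives $q_{i,j_0}\equiv P^\beta x$ and hence $q_{i,j_0}x\equiv P^\beta x^2$, so each divisibility in the disjunct yields
\[
(P^\beta x - q_{i,j_0})\mid P^\beta(y-x^2).
\]
I would then argue that the $c_K$ elements $P^\beta x - q_{i,j_0}$ are pairwise coprime away from factors of the (small) differences $q_{i,j_0}-q_{i',j_0}$, so that the $c_K$ parallel divisibilities combine into a lower bound on the divisor of $P^\beta(y-x^2)$ outside $S$ that is strong enough, after applying the sum formula $\sum_{\pp}\ord_\pp(\cdot)=0$, to force $P^\beta(y-x^2)=0$, whence $y=x^2$.

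The hard part will be this final height/degree comparison. The precise $S$-adic valuations of each $P^\beta x - q_{i,j_0}$ supplied by Lemma \ref{fakepigeonhole} (via the strict inequalities $\ord_\pp(P^\beta x)\ne\ord_\pp q_{i,j_0}$ and $\ord_\pp(P^\beta y)\ne\ord_\pp(q_{i,j_0}x)$) are exactly what lets me compute the ``size'' of the product, and the sum formula converts this into a degree bound outside $S$. The role of $c_K$ is to provide enough nearly-coprime parallel factors so that their product outgrows the plausible size of $P^\beta(y-x^2)$; the role of $2s+1$ is to make Lemma \ref{fakepigeonhole} uniformly applicable; and the scalings $P^\beta$ for $\beta\in I$ are exactly what is needed to dodge the coincidences in valuation at $S$-primes that would otherwise corrupt the accounting.
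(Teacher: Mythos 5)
Your proposal diverges from the paper's proof in two material ways, and both create genuine gaps.

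First, you quantify with a disjunction $\bigvee_{(\beta,j_0)}$ over the index set, whereas the paper's formula $\Sq_u$ imposes a conjunction $\bigwedge_{\alpha,i,j}$ over \emph{all} choices. The forward direction works either way, but in the backward direction you need to apply Lemma~\ref{fakepigeonhole}, which produces a specific ``good'' pair $(\beta,j_0)$ depending on $x,y$; with only a disjunction you merely know that the divisibilities hold for \emph{some} pair, which may fail the valuation inequalities $\ord_\pp(P^\beta x)\ne\ord_\pp q_{i,j_0}$ that the entire valuation bookkeeping rests on. (This part is easily repaired by switching to a conjunction.)

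Second, and more seriously, you impose one-way divisibility $\mid$ where the paper uses the associate relation $\ass$ (i.e.\ mutual divisibility). The paper's converse argument hinges on this: $\ass(P^\beta x + q_{i,j_0},\,P^\beta y + q_{i,j_0}x)$ produces \emph{unit} multipliers $u_i$ with
$P^\beta x + q_{i,j_0} = u_i(P^\beta y + q_{i,j_0}x)$; since Lemma~\ref{fakepigeonhole} forces $\ord_\pp(u_i)$ to be independent of $i$ at every $\pp\in S$, the ratios $u_i/u_\ell$ have empty divisor and hence are constants. With $c_K$ indices and only $c_K-1$ nonzero constants, the pigeonhole principle yields $u_i=u_\ell$ for some $i\ne\ell$, and subtracting the two identities collapses to $ux=1$, whence $y=x^2$. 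With one-way $\mid$ only, the quotients $w_i=(P^\beta y+q_{i,j_0}x)/(P^\beta x+q_{i,j_0})$ lie in $\O_{K,S}$ but are not units; they can have nontrivial and $i$-dependent zero divisors outside $S$, so the ``same $S$-valuations $\Rightarrow$ constant ratio'' step fails, and the pigeonhole is lost.

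You substitute a degree/sum-formula argument for the pigeonhole, but the gap there is that $\deg_K(\Div_\infty(y))$ is not bounded in terms of $\deg_K(\Div_\infty(x))$ by the hypotheses. Your scheme aims to compare $\sum_i\deg_K\Div_0(P^\beta x - q_{i,j_0})$ against $\deg_K\Div_0\bigl(P^\beta(y-x^2)\bigr)$, but the latter equals $\deg_K\Div_\infty\bigl(P^\beta(y-x^2)\bigr)$, which grows with $\deg_K\Div_\infty(y)$ and can dwarf the former when $y$ is a unit with much larger pole divisor than $x$. Without a bound on $y$ in terms of $x$, the inequality cannot be closed, and the argument does not force $P^\beta(y-x^2)=0$. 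In short: replace the disjunction with a conjunction, upgrade $\mid$ to the associate relation, and you will land on the paper's pigeonhole argument; the quantitative route as sketched does not close.
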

\proof
We shall prove that the following  formula works.
$$
\Sq_u(x,y)\colon x\mid 1\wedge y\mid 1\wedge\bigwedge_{\substack{\alpha\in I\\i=1,\dots,c_K\\ j=1,\dots,2s_\infty+1}}\ass(P^{\alpha} x+q_{i,j},P^{\alpha} y+q_{i,j}x).
$$
If $y=x^2$ for some $x,y\in\O^\times_{K,S}$, then $\Sq_u(x,y)$ holds trivially in $\mathcal M_S$. 

Assume  that $\Sq_u(x,y)$ holds in $\mathcal M_S$, and let $\beta$ and $j_0$ be given by Lemma \ref{fakepigeonhole}. Let $u_{i}$ be units such that
\begin{equation}\label{equprin3-1}
P^{\beta}x+q_{i,j_0}=u_{i}(P^{\beta}y+q_{i,j_0}x).
\end{equation}
Note that by our definition of the $q_{i,j}$, we have $P^{\beta}x+q_{i,j_0}$, $P^{\beta}x$, $P^{\beta}y$ and $q_{i,j_0}$ are different from $0$ for every $i$. For every prime $\pp$ in $S$, since $\ord_\pp (P^{\beta}x)\neq \ord_\pp (q_{i,j_0})$ and $\ord_\pp(P^{\beta}y)\neq\ord_\pp(q_{i,j_0}x)$ by Lemma \ref{fakepigeonhole}, we have 
$$
\begin{aligned}
\ord_\pp(u_{i})&=\ord_\pp(u_{i}(P^{\beta}y+q_{i,j_0}x))-\ord_\pp(P^{\beta}y+q_{i,j_0}x)\\
&=\ord_\pp(P^{\beta}x+q_{i,j_0})-\min \{\ord_\pp(P^{\beta}y),\ord_\pp(q_{i,j_0}x)\}\\
&=\min \{\ord_\pp(P^{\beta}x),\ord_\pp(q_{i,j_0})\}-\min \{\ord_\pp(P^{\beta}y),\ord_\pp(q_{i,j_0}x)\},
\end{aligned}
$$
and the latter expression does not depend on $i$, because by definition of $q_{i,j}$ we have $\ord_\pp q_{i,j}=\ord_\pp q_{\ell,j}$ for every $i$ and $\ell$. In particular, we have $\ord_\pp (u_{i})=\ord_\pp (u_{\ell})$ for every prime $\pp$ in $S$ and every $i$ and $\ell$. This implies that $u_i$ and $u_\ell$ have the same zeros and poles, hence they differ by a constant unit. Since there are $c_K-1$ constant units, it follows from the pigeonhole principle that there exists $i\ne \ell$ so that $u_i=u_{\ell}=u$. 

From Equation \eqref{equprin3-1} for $i$ and $\ell$, after subtracting the sides of the equations, we obtain $1=ux$.  Substituting in the first equation we get $y=x^2$, as required. 
\endproof

Next, we need to be sure that there are  large enough units. 


\begin{lemma}\label{lemunitinforder}
For every $x\in O^*_{K,S}$ there exists a unit $\varepsilon$ of infinite order such that $x \dib \varepsilon-1$.  
\end{lemma}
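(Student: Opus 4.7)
The plan is to apply the $S$-unit theorem and reduce modulo $x$. By Dirichlet's $S$-unit theorem for global function fields, $\O_{K,S}^\times$ is a finitely generated abelian group of rank $|S|-1$, with torsion part equal to the finite constant field $C_K^\times$. Since we are assuming $|S|\ge 2$ (the hypothesis of Theorem \ref{mainund}), this rank is at least $1$, so $\O_{K,S}^\times$ contains elements of infinite order and, more importantly, any finite-index subgroup of it still has rank $\ge 1$ and hence also contains elements of infinite order.

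First I would argue that the quotient ring $\O_{K,S}/x\O_{K,S}$ is finite for every nonzero $x$. Indeed, $\O_{K,S}$ is a Dedekind domain whose nonzero prime ideals correspond to the primes of $\O_K$ outside $S$, and all its residue fields are finite extensions of the finite constant field. Factoring $x\O_{K,S}=\prod_{i=1}^m \qq_i^{a_i}$ and applying the Chinese Remainder Theorem gives
$$
\O_{K,S}/x\O_{K,S}\;\cong\;\prod_{i=1}^m \O_{K,S}/\qq_i^{a_i},
$$
each factor of which is finite.

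Next I would consider the natural reduction homomorphism
$$
\phi\colon \O_{K,S}^\times \longrightarrow \bigl(\O_{K,S}/x\O_{K,S}\bigr)^\times,\qquad \varepsilon\longmapsto \varepsilon+x\O_{K,S}.
$$
Its kernel is exactly
$$
H=\{\,\varepsilon\in \O_{K,S}^\times : \varepsilon\equiv 1\pmod{x\O_{K,S}}\,\}=\{\,\varepsilon\in \O_{K,S}^\times : x\mid \varepsilon-1\,\},
$$
which is precisely the set of units of the shape we want.

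Finally, since the codomain of $\phi$ is finite, $H$ has finite index in $\O_{K,S}^\times$, so it has the same $\mathbb{Z}$-rank $|S|-1\ge 1$. Therefore $H$ is not torsion and contains an element $\varepsilon$ of infinite order, which satisfies $x\mid \varepsilon-1$ by construction. No real obstacle arises; the only point that needs care is invoking the $S$-unit theorem in the function-field setting and verifying that the rank assumption is guaranteed by the standing hypothesis $|S|\ge 2$ of the undecidability section.
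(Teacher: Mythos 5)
Your proof is correct, and it takes a genuinely different and cleaner route than the paper's. You invoke the full $S$-unit theorem for $\O_{K,S}^\times$ directly: since $|S|\ge 2$, the rank of $\O_{K,S}^\times$ is $|S|-1\ge 1$, and because the kernel $H$ of the reduction map to the finite ring $(\O_{K,S}/x\O_{K,S})^\times$ has finite index, $H$ retains that positive rank and therefore contains a unit of infinite order with $x\mid\varepsilon-1$. The paper instead works with $\O_K^\times$ (units of the ring of integers, not $S$-units) and is consequently forced to split into two cases: when there are at least two infinite places, $\O_K^\times$ already has positive rank and one can take a power of a unit of infinite order that is $\equiv 1$ modulo the finite ring $\O_K/(x)$; when there is only one infinite place, $\O_K^\times$ is torsion, so they instead pick a finite prime $\pp\in S$, take $\varepsilon_0$ generating the principal ideal $\pp^{h_K}$ (which is a unit of infinite order in $\O_{K,S}$, coprime to $x$), and argue via nonzerodivisors in $\O_K/(x^{h_K})$. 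Your single uniform argument subsumes both cases and avoids the ideal class number entirely; what it requires, and what the paper elsewhere already cites from Rosen's book, is the Dirichlet unit theorem in its $S$-unit form. The only cosmetic point is that you should cite the $S$-unit theorem for global function fields explicitly (e.g. \cite[Prop.~14.2]{R02}), as the paper's reference at Lemma~\ref{keylemma} is to Prop.~14.1 which concerns only the places at infinity.
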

\proof
Let $x\in O^*_{K,S}$ be given. Without loss of generality we may assume that $x$ is not a unit and that $x\in \O_{K}$. Let $(x)=\pp_1^{\alpha_1}\cdots \pp_\ell^{\alpha_\ell}$ be the prime factorization  of the ideal $x\O_K$. 

If the set $\{\pp \in S\colon \pp \dib \infty\}$ has cardinality bigger than $2$, then we can choose a unit $\varepsilon_0\in \mathcal O^\times_K$ of infinite order. In that case, since $\O_K/(x)$ is a finite ring, there exist $m<n$ such that $\varepsilon_0^m\equiv \varepsilon_0^n\pmod{(x)}$, and since $\varepsilon_0$ is a unit, we have $\varepsilon_0^{n-m}\equiv 1\pmod{(x)}$. Thus, $\varepsilon=\varepsilon_0^{n-m}$ is as required.

Otherwise, choose $\pp\in S$ which is not a prime at infinity (such a $\pp$ exists as we assumed that $S$ has at least two elements). Let $h_K$ denote the ideal class number. As each ideal $\pp_i^{\alpha_ih_K }$ is principal, we can write $x^{h_K}=x_1\cdots x_\ell$, where $(x_i)=\pp_i^{\alpha_ih_K}$. We may assume, without loss of generality, that $\pp_i$ does not belong to $S$ for any $i$. Let $\varepsilon_0$ be any generator of $\pp^{h_K}$.  Since $\mathcal O_K/(x^{h_K})$ has finite order, and the class of $\varepsilon_0$ is not a zero-divisor in the quotient, we can conclude as in the previous case. 
\endproof

\begin{rmk}
	Note that since the torsion part of  $\O^\times_{K,S}$ is finite, there is a positive existential $\mathcal{L}_{\Div,t}$-formula $\varphi_\infty(x)$ so that $\varphi_\infty(u)$ holds in $\mathcal{M}_S$ if and only if $u$ is a unit of infinite order.
\end{rmk}

\subsection{Defining the square function}

The rest of the section will be devoted to prove that the square function is positive existentially definable over $\mathcal{M}_S$. In order to do this, we will introduce some notion of relative largeness between non-zero elements and units of $\O^*_{K,S}$. 

\begin{lemma}\label{keylemma}
There is a constant $C_1\geq 0$ so that given  any $x\in \mathcal O_{K,S}^*$, there exists $a\in  O_{K,S}^*$  and $b\in \O^*_{K}$ such that $x=\frac{a}{b}$, $\ord_\pp(a)\geq -h_K$ for every $\pp\in S_{\rm fin}$, and $\ord_{\qq}(b)\leq C_1$ for every $\qq\in S_\infty$.  Moreover, for every $\pp\in S_{\rm fin}$, if $\ord_\pp(a)>0,$ then $\ord_\pp(b)=0$.
\end{lemma}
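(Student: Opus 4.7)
The strategy is to construct $b$ as a generator of an explicit principal ideal of $\O_K$ chosen to absorb the poles of $x$ at primes in $S_{\rm fin}$, and then to exploit the ambiguity in the choice of generator (multiplication by $\O_K^\times$) to control the orders at $S_\infty$.

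For each $\pp \in S_{\rm fin}$, set $n_\pp = \lfloor -\ord_\pp(x)/h_K \rfloor$ when $\ord_\pp(x) < 0$, and $n_\pp = 0$ otherwise. The ideal $\J := \prod_{\pp \in S_{\rm fin}} \pp^{h_K n_\pp}$ is principal, being a product of $h_K$-th powers of prime ideals. For any generator $b_0$ of $\J$ and $a_0 := x b_0$, Euclidean division $-\ord_\pp(x) = n_\pp h_K + r_\pp$ with $0 \leq r_\pp < h_K$ gives $\ord_\pp(a_0) \in [-h_K + 1, 0]$ when $\ord_\pp(x) < 0$, and $\ord_\pp(a_0) = \ord_\pp(x) \geq 0$ with $\ord_\pp(b_0) = 0$ otherwise. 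Outside of $S$ one has $\ord_\pp(b_0) = 0$ and $\ord_\pp(a_0) = \ord_\pp(x) \geq 0$, so $a_0 \in \O_{K,S}^*$, the bound $\ord_\pp(a_0) \geq -h_K$ is verified at every $\pp \in S_{\rm fin}$, and the ``moreover'' clause is automatic: $\ord_\pp(a_0) > 0$ forces $\ord_\pp(x) > 0$, hence $n_\pp = 0$ and $\ord_\pp(b_0) = 0$.

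The main obstacle is upgrading $b_0$ to a generator with uniformly bounded orders at $S_\infty$. Two generators of $\J$ differ by a unit in $\O_K^\times$, and by the function-field analogue of Dirichlet's unit theorem the image $\Lambda$ of $\O_K^\times \to \mathbb Z^{s_\infty}$, $\varepsilon \mapsto (\ord_\qq(\varepsilon))_{\qq \in S_\infty}$, is a full-rank lattice in the hyperplane $H_0 = \{y \in \mathbb R^{s_\infty} : \sum_\qq \deg(\qq) y_\qq = 0\}$. Let $\rho \geq 0$ be the covering radius of $\Lambda$ with respect to the $L^\infty$-norm; this is a finite constant depending only on $K$ and $S$, and I claim that $C_1 := \rho$ works.

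Indeed, writing $v_0 := (\ord_\qq(b_0))_{\qq \in S_\infty}$, the product formula together with $\ord_\pp(b_0) \geq 0$ at every finite prime yields $c := \sum_\qq \deg(\qq) v_{0,\qq} = -\sum_\pp \deg(\pp) \ord_\pp(b_0) \leq 0$. Hence $y := v_0 + (|c|/\sum_\qq \deg(\qq))(1,\dots,1)$ lies in $H_0$ and satisfies $y \geq v_0$ componentwise, and by the covering-radius property there is $\ell \in \Lambda$ with $|\ell - y|_\infty \leq \rho$, so $\ell_\qq \geq v_{0,\qq} - \rho$ for every $\qq$. Selecting $\varepsilon \in \O_K^\times$ mapping to $\ell$ and replacing $b_0$ by $b := b_0/\varepsilon$ preserves the data at all finite primes (since $\ord_\pp(\varepsilon) = 0$ there) while achieving $\ord_\qq(b) = v_{0,\qq} - \ell_\qq \leq \rho = C_1$ at every $\qq \in S_\infty$. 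The crucial point enabling the uniform bound is that $b \in \O_K$ forces $c \leq 0$, which is exactly what lets a single constant $\rho$ absorb the $b_0$-dependence.
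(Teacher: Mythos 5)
Your proof is correct and follows the same two-step strategy as the paper: first multiply $x$ by a generator of a principal ideal supported on $S_{\rm fin}$ to bring the orders of $a$ at those primes into $(-h_K,0]$, then adjust that generator by a unit, via the function-field Dirichlet unit theorem, to bound the orders of $b$ at $S_\infty$ by a constant depending only on $K$ and $S$. The differences are implementational rather than conceptual. For the first step you take $\J=\prod_\pp \pp^{h_K n_\pp}$ with $n_\pp=\lfloor -\ord_\pp(x)/h_K\rfloor$, which is principal because $h_K$-th powers always are; the paper instead picks, for each $\pp$, the largest $\beta_\pp\le|\ord_\pp(x)|$ with $\pp^{\beta_\pp}$ principal (equivalently the largest multiple of the order of $\pp$ in the class group), a finer choice but one that yields the same final bound $\ord_\pp(a)\ge -h_K$. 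For the second step you argue via the covering radius $\rho$ of $\Lambda$ in the trace-zero hyperplane $H_0$ (weighted by residue degrees, which is the correct normalization), observing that the product formula together with $b_0\in\O_K$ forces the infinite-place sum to be nonpositive, so shifting $v_0$ into $H_0$ can only increase coordinates, and then a single constant absorbs the $b_0$-dependence. The paper instead splits into two cases according to whether $\sum_i\ord_{\infty_i}(b')$ is below or above a threshold $-N$, handles the first by a geometry-of-numbers argument giving bound $0$ and the second by enumerating the finitely many fundamental-domain representatives; your covering-radius phrasing is more uniform and avoids the case split, at the cost of not recording the slightly sharper observation that $C_1=0$ is attainable when $s_\infty=1$ (though your argument does still give $C_1=0$ in that case, since $\Lambda=\{0\}=H_0$). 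One tiny slip: $\ord_\pp(a_0)>0$ forces $\ord_\pp(x)\ge 0$ rather than $>0$, but either way $n_\pp=0$ and $\ord_\pp(b_0)=0$, so the ``moreover'' clause still holds.
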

\proof
Let $D(x)=\{\pp\in S_{\rm fin}\colon \ord_\pp(x)<0\}$  and let $N(x)=\{\pp\in S_{\rm fin}\colon \ord_\pp(x)>0\}$.  For each   prime $\pp\in D(x)$, set $\beta_\pp\leq |\ord_\pp(x))|$ be the greatest integer such that $\pp^{\beta_\pp}$ is a principal ideal. If there is no such $\beta_\pp$, set $\beta_\pp=0$. Let $b'\in\O_K$ such that $b'\O_K=\prod_{\pp\in D(x) }\pp^{\beta_\pp}$, and let $a'=b'x$. We have then 
$$
\ord_\pp(a')=\beta_\pp+\ord_\pp(x)\geq -h_K, 
$$
as otherwise we would have $\beta_\pp+h_K<-\ord_\pp(x)=|\ord\pp(x)|$, which would contradict the choice of $\beta_\pp$. 

The $b$ we are looking for will be a carefully chosen associate of $b'$.  If $s_\infty=1$, we choose $b=b'$. Let us assume $s_\infty\geq 2$. Let $i\colon \O_K\to \mathbb Z^{s_\infty}$ be the map given by $i(x)=(\ord_{\infty_i}(x))_i$.  By the analogue of Dirichlet unit theorem for function fields (see \cite[Prop. 14.1 (b), p. 243]{R02}), we have that the image of the unit group $\Lambda=i(\O_K^\times)$ is a full sublattice of the lattice $\{ (c_1,\dots,c_{s_\infty})\colon \sum_{i=1}^{s_\infty} c_i=0\}$. Notice that if $N$ is a large enough positive integer (one can take the product of $\sqrt{s_\infty}$ by the diameter of a disc containing the fundamental domain), then for any $x\in \O_K$ such that $\sum_{i=1}^{s_\infty} \ord_{\infty_i}(x)\leq -N$, we can find an element $(c_1,\dots,c_{s_\infty})\in \Lambda$ such that $c_i+\ord_{\infty_i}(x)\leq 0$ for any $i=1,\dots,s_\infty$. If $\sum_i \ord_{\infty_i}(b')\leq -N$, there exists $c_1,\dots,c_{s_\infty}$ such that $c_i+\ord_{\infty_i}(b')\leq 0$, so there exists a unit $u$ such that $i(u)=(c_1,\dots,c_{s_\infty})$. In that case, we choose $b=ub'$ and we can take $C'=0$. Assume $\ell=\sum_i \ord_{\infty_i}(b')>-N$. There exists a unit $i(u)=(c_1,\dots,c_{s_\infty})$ such that $i(ub')$ lies within a fundamental domain of $\Lambda+\ell$. Since there are finitely many possible values for $\ell$, and for each given $\ell$, there are finitely many points within a fundamental domain of $\Lambda+\ell$, we can choose the maximum of the coordinates of these points as $\ell$ varies. In that case, take $C''$ to be this maximum. The maximum between $C'$ and $C''$ is the constant $C_1$ that we were looking for. 
\endproof  


We now need a few more definitions. For $x\in K^*$, we define the divisors of zeros and poles as follows:

$$
\Div_0(x)=\sum_{\substack{\ord_\pp(x)>0}} \ord_{\pp}(x)\pp \qquad{\rm and}\qquad  \Div_\infty(x)=-\sum_{\substack{\ord_\pp(x)<0}} \ord_{\pp}(x)\pp
$$
and define $\deg_K(\mathfrak{a})=\sum_{\pp\in M_K} f(\pp)\ord_{\pp}(\mathfrak{a})$. 
Let 
$$
\ord_{\rm min} (x)=\min\{\ord_{\infty_i}(x)\colon i=1,\dots,s_\infty\}.
$$

We now recall a few definitions and notation from \cite[Ch. 7, pp. 82--83]{R02}. 

Let $\mathcal{D}_K, \mathcal{D}_F$ denote the divisor group of $K$ and $F$, respectively. 
\begin{itemize}
	\item Let $i_{K/F}\colon \mathcal{D}_F\to \mathcal{D}_K$ be the morphism defined  by $i_{K/F}(P)=\sum_{\substack{\pp\dib P}}e(\pp)\pp$ for all places $P\in M_F,$ and extended linearly.
	\item $\deg_K\colon \mathcal{D}_K\to \mathbb{Z}$ be the morphism defined  by $\dim_{C_K}(\O_\pp/\pp)$ for all places $\pp\in M_K$, and extended linearly.
	
\end{itemize}

\begin{lemma} For any $x\in K^*$,
	$$\deg_K(\Div_0(x))=\deg_K(\Div_\infty(x)).$$
\end{lemma}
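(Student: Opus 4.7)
Plan: The lemma is equivalent to asserting that principal divisors in $K$ have degree zero under $\deg_K$ --- the classical function-field analogue of the product formula. My strategy would be to reduce to the rational function field $F=\mathbb F_p(t)$ via the norm map $N_{K/F}\colon K^\times\to F^\times$, where the statement becomes a direct computation.

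First I would settle the case $K=F$ by hand. Any nonzero $y\in F$ can be written uniquely as $y=c\prod_i P_i^{a_i}$ with $c\in\mathbb F_p^\times$ and the $P_i$ distinct monic irreducibles of $\mathbb F_p[t]$. The places of $F$ are the monic irreducibles together with the place at infinity; one directly checks that $\ord_{P_i}(y)=a_i$, $\ord_\infty(y)=-\sum_i a_i\deg P_i$, and $0$ at every other place. Since $\deg_F(P_i)=\deg P_i$ and $\deg_F(\infty)=1$, separating the sum into its positive and negative parts gives $\deg_F(\Div_0(y))=\deg_F(\Div_\infty(y))=\sum_{a_i>0}a_i\deg P_i$, settling the case.

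For the general case set $y:=N_{K/F}(x)\in F^\times$, and apply the standard valuation-theoretic formula for norms in a finite separable extension,
$$
\ord_P(y)=\sum_{\pp\mid P} f(\pp/P)\,\ord_\pp(x)
$$
for every place $P$ of $F$, which follows by completing at $P$ and invoking the corresponding local identity $v_P(N_{K_\pp/F_P}(x))=f(\pp/P)\,v_\pp(x)$. Using the multiplicativity of residue-field degrees $[k(\pp):C_F]=f(\pp/P)\deg_F(P)=[C_K:C_F]\,\deg_K(\pp)$, one rewrites
$$
\deg_F(\Div(y))=\sum_{\pp} f(\pp/P)\deg_F(P)\,\ord_\pp(x)=[C_K:C_F]\,\deg_K(\Div(x)),
$$
where $P$ denotes the place of $F$ below $\pp$. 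The left-hand side vanishes by the previous step, so dividing through by $[C_K:C_F]\neq 0$ gives $\deg_K(\Div(x))=0$, i.e.\ $\deg_K(\Div_0(x))=\deg_K(\Div_\infty(x))$.

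The main technical point to watch is the bookkeeping of the constant-field extension $C_K/C_F$: the intrinsic degree $\deg_K$ is taken over $C_K$, while the norm naturally relates things over $C_F$, so the factor $[C_K:C_F]$ appears in the comparison and must not be dropped. Once this is handled correctly, the rest is a routine manipulation of the local norm formula.
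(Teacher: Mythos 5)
The paper does not supply a proof of this lemma; it is invoked as the classical fact that a principal divisor has degree zero, so there is no argument in the text to compare against. Your proof is correct: the base case $K=F=\mathbb F_p(t)$ is settled by a direct count of zeros and poles, the passage to general $K$ via the norm formula $\ord_P(N_{K/F}(x))=\sum_{\pp\mid P}f(\pp/P)\,\ord_\pp(x)$ is standard, and you correctly track the constant-field factor $[C_K:C_F]$ when converting between $\deg_F$ and $\deg_K$, which is indeed the one place where a careless argument goes wrong. For comparison, the usual textbook route (e.g.\ Rosen, Prop.~5.1) avoids the norm entirely: for $x\notin C_K$ one regards $K$ as a finite extension of the rational subfield $C_K(x)$ and applies the fundamental identity $\sum_{\pp\mid P}e_\pp f_\pp=[K:C_K(x)]$ to the zero and the pole of $x$ in $C_K(x)$, giving the sharper conclusion $\deg_K(\Div_0(x))=\deg_K(\Div_\infty(x))=[K:C_K(x)]$. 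Your norm-based argument is equally rigorous and closer in spirit to the number-field product formula; it trades away the exact common value but sidesteps introducing the auxiliary subfield $C_K(x)$ and the fundamental identity for the possibly inseparable extension $K/C_K(x)$.
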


The following lemma correspond to \cite[Prop. 7.7, pp. 82]{R02}.
\begin{lemma}\label{rosen1}
	Let $\mathfrak{A}\in \mathcal{D}_K$ and $A\in \mathcal{D}_F$. We have
	$$
 \deg_K(i_{K/F}(A))=\frac{[K\colon F]}{[C_K\colon \mathbb{F}_p]}\deg_F(A).
	$$
\end{lemma}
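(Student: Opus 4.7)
The plan is to reduce to the case of a single prime by linearity, then carry out a routine residue-field computation and close the argument with the fundamental identity $\sum e_i f_i = [K\colon F]$.

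Both sides of the asserted formula are $\mathbb Z$-linear in $A$, so it will suffice to verify the identity when $A=P$ is a single place of $F$. Since $\F_p$ is the field of constants of $F=\F_p(t)$, we have $\deg_F(P)=[k(P):\F_p]$, while by the definitions of $i_{K/F}$ and $\deg_K$ given just above the statement,
$$
\deg_K(i_{K/F}(P)) \;=\; \sum_{\pp \mid P} e(\pp\mid P)\,[k(\pp):C_K].
$$

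I would next compute $[k(\pp):C_K]$ by juxtaposing the two towers $\F_p \subseteq k(P) \subseteq k(\pp)$ and $\F_p \subseteq C_K \subseteq k(\pp)$, giving
$$
[k(\pp):C_K] \;=\; \frac{[k(\pp):\F_p]}{[C_K:\F_p]} \;=\; \frac{f(\pp\mid P)\,[k(P):\F_p]}{[C_K:\F_p]} \;=\; \frac{f(\pp\mid P)\,\deg_F(P)}{[C_K:\F_p]}.
$$
Substituting this back and pulling the common factor out of the sum yields
$$
\deg_K(i_{K/F}(P)) \;=\; \frac{\deg_F(P)}{[C_K:\F_p]}\sum_{\pp \mid P} e(\pp\mid P)\,f(\pp\mid P).
$$

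The conclusion then follows from the standard identity $\sum_{\pp\mid P} e(\pp\mid P)\,f(\pp\mid P) = [K\colon F]$, which is valid because $K/F$ is a finite separable extension of global function fields. No step is genuinely hard; the whole argument is definition-chasing together with $\sum ef = n$. The only point that requires a bit of care is the appearance of the factor $[C_K:\F_p]$: the normalization of $\deg_K$ is taken against the field of constants of $K$, while $\deg_F(P)$ is measured against $\F_p$ itself, and the ratio in the statement of the lemma is exactly what reconciles the two normalizations.
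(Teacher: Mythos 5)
Your proof is correct, and it is the standard argument: the paper itself gives no proof but cites Rosen [Prop.\ 7.7], and Rosen's proof proceeds exactly as you do, reducing by linearity to a single place and concluding via $\sum e f = [K:F]$. One tiny remark: the fundamental identity $\sum_{\pp\mid P} e(\pp)f(\pp)=[K:F]$ holds for arbitrary finite extensions of global fields (not just separable ones), but invoking separability as you do is harmless since the paper assumes it throughout.
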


Let $q_j$, with $j=1,\dots, 2s_\infty+1$ be a finite sequence of irreducible polynomials in $\F_p[t]$ not below any prime of $S_{\rm fin}$ and such  that $\ord_\infty( q_j-q_\ell)< -2C_1$, and $ \ord_{\infty}(q_j)\ne \ord_{\infty}(q_\ell)$ for any $j\ne\ell$. Let $q$ be an irreducible polynomial not below any of the primes in $S_{\rm fin}$ such that 
$$
\deg(q)>h_Ks_{\rm f}\frac{[C_K: \F_p]}{[K:F]}D,
$$ 
where 
$$
D=\max\{\deg_K(\pp)\colon \pp \in S\}.
$$

The following Lemma makes precise the idea that the unit $\varepsilon$ in Lemma \ref{lemunitinforder} is much larger than $x$. 


\begin{lemma}\label{lemC2}
Let $x\in \O^*_{K,S} $ be such that $\ord_\pp(x)\ne 0$ for any $\pp\in S_{\rm fin}$, and let $\varepsilon\in \O_{K,S}^\times$ be a unit of infinite order such that $x+q_j | \varepsilon-1$ for $j=1,\dots, 2s_\infty+1$, and $q\dib \varepsilon-1$. If we write $x=\frac{a}{b}$ and $\varepsilon=\frac{u}{v}$, where $a,b$ and $u,v$ are as in Lemma \ref{keylemma}, then there exists a constant $C_2$ such that  
$$
-\min\{ \ord_{\rm min}(a),\ord_{\rm min}(b)\}\leq -s_{\infty} D \ord_{\rm min} (u-v) +C_2.
$$
\end{lemma}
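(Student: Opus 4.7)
The plan is to apply the product formula $\deg_K(\Div_0(y))=\deg_K(\Div_\infty(y))$ to the element $y=a+q_jb$, for a carefully chosen index $j\in\{1,\dots,2s_\infty+1\}$. Since the $q_j$ have pairwise distinct degrees, they take pairwise distinct values under $\ord_{\infty_i}$ for each $i$, so by pigeonhole at most $s_\infty$ of the $2s_\infty+1$ indices can satisfy $\ord_{\infty_i}(q_j)=\ord_{\infty_i}(x)$ for some $i$. Using in addition the bound $\ord_\infty(q_j-q_\ell)<-2C_1$ to eliminate any $q_j$ of small degree, I fix a ``good'' $j$ for which $\ord_{\infty_i}(x)\ne\ord_{\infty_i}(q_j)$ for every $i$ and $\deg(q_j)>C_1$. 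Then $\ord_{\infty_i}(a+q_jb)=\min\bigl(\ord_{\infty_i}(a),\,\ord_{\infty_i}(b)-e(\infty_i)\deg(q_j)\bigr)$, which is strictly negative because $\ord_{\infty_i}(b)\le C_1<e(\infty_i)\deg(q_j)$.

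The key structural fact is that the divisor of $a+q_jb$ has poles supported in $S$ and zeros supported in the complement of $S$. Poles outside $S$ are ruled out by the integrality of $a,b,q_j$ there. For zeros in $S$, the infinite places are already handled; at $\pp\in S_{\rm fin}$ we split on $\ord_\pp(a)$. If $\ord_\pp(a)>0$, Lemma~\ref{keylemma} forces $\ord_\pp(b)=0$, so $\ord_\pp(a+q_jb)=0$. If $\ord_\pp(a)=0$, the hypothesis $\ord_\pp(x)\ne 0$ forces $\ord_\pp(b)>0$, and again $\ord_\pp(a+q_jb)=0$. If $\ord_\pp(a)<0$, the pole of $a$ dominates and $\ord_\pp(a+q_jb)=\ord_\pp(a)<0$. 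Evaluating the infinity formula at a place $\infty_{i_0}$ achieving $\max\bigl(-\ord_{\rm min}(a),-\ord_{\rm min}(b)\bigr)$ gives the lower bound
$$
\deg_K(\Div_\infty(a+q_jb))\ge -\min\{\ord_{\rm min}(a),\ord_{\rm min}(b)\}.
$$

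For the matching upper bound on $\deg_K(\Div_0(a+q_jb))$, let $w_j:=(\varepsilon-1)/(x+q_j)\in\O_{K,S}$, so that $(u-v)b=vw_j(a+q_jb)$. At every $\pp\notin S$ we have $\ord_\pp(v),\ord_\pp(w_j)\ge 0$, hence $\ord_\pp(a+q_jb)\le \ord_\pp(u-v)+\ord_\pp(b)$; combined with $\sum_{\pp\notin S}\ord_\pp(b)\deg_K(\pp)=0$ (the construction in Lemma~\ref{keylemma} confines all finite-prime zeros of $b$ to $S_{\rm fin}$, and the unit factor contributes only at infinity), this yields $\deg_K(\Div_0(a+q_jb))\le\sum_{\pp\notin S}\ord_\pp(u-v)\deg_K(\pp)$. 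Applying the product formula to $u-v$ with $\ord_\pp(u-v)\ge -h_K$ at $\pp\in S_{\rm fin}$ and $\ord_\pp(u-v)\ge \ord_{\rm min}(u-v)$ at $\pp\in S_\infty$ then gives
$$
\sum_{\pp\notin S}\ord_\pp(u-v)\deg_K(\pp)\le h_Ks_{\rm f}D - s_\infty D\,\ord_{\rm min}(u-v),
$$
provided $-\ord_{\rm min}(u-v)\ge 0$; if instead $u-v$ had no pole at infinity, then $\Div_\infty(u-v)$ would live in $S_{\rm fin}$ with total degree at most $h_Ks_{\rm f}D$, whereas $q\mid\varepsilon-1$ together with $\deg(q)>h_Ks_{\rm f}\tfrac{[C_K\colon\F_p]}{[K\colon F]}D$ and Lemma~\ref{rosen1} would force $\deg_K(\Div_0(u-v))>h_Ks_{\rm f}D$, contradicting the product formula. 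Concatenating the three inequalities yields the stated bound with $C_2=h_Ks_{\rm f}D$. The main technical obstacle is the case analysis establishing that $a+q_jb$ has no zeros in $S$, which is precisely where the fine-grained conclusions of Lemma~\ref{keylemma} and the hypothesis $\ord_\pp(x)\ne 0$ in $S_{\rm fin}$ are both essential.
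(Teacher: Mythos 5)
Your proposal is correct and takes essentially the same route as the paper: a pigeonhole choice of $j$ using the distinct degrees of the $q_j$ and the separation $\ord_\infty(q_j-q_\ell)<-2C_1$, the case analysis at $\pp\in S_{\rm fin}$ (via Lemma~\ref{keylemma} and $\ord_\pp(x)\neq 0$) showing $a+q_jb$ has no zeros in $S$, and two applications of the product formula linked through the divisibilities $x+q_j\mid\varepsilon-1$ and $q\mid\varepsilon-1$, with $q\mid\varepsilon-1$ together with Lemma~\ref{rosen1} forcing $\ord_{\min}(u-v)<0$. The paper packages $\ord_{\min}(u-v)<0$ as a preliminary Claim~A and runs a two-round pigeonhole in its Claim~B where you use a single round plus the degree bound $\deg(q_j)>C_1$, but the underlying mechanism and resulting constant are the same.
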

\proof Write $x=\frac{a}{b}$, and $\varepsilon=\frac{u}{v}$, where $a,b$ and $u,v$ are as in the  Lemma \ref{keylemma}.\\
We will split the proof in few claims. First of all, we shall show that $q\dib \varepsilon-1$, implies that $\ord_{\min}(u-v)<0$.\\

\noindent{\bf Claim A:} There is an $i$ such that $\ord_{\infty_i}(u-v)<0$. Hence, $\ord_{\min}(u-v)<0.$\\
\textit{Proof of Claim A:}
Since $q\dib u-v$ we have $\ord_{\mathfrak{q_i}}(q)\leq \ord_{\mathfrak{q_i}}(u-v)$ for all the primes $\mathfrak{q}_1,\dots,\mathfrak{q}_k$ above $q$. This implies
\begin{equation}\label{eqlemC2a}
\begin{aligned}
\deg_K(i_{K/F}(\Div_0(q)))=\\
\sum_{i=1}^{k}\ord_{\mathfrak{q}_i}(q)\deg_K(\qq_i)&\leq \sum_{\substack{\ord_\pp(u-v)>0}}
\ord_{\pp}(u-v)\deg_K(\pp)\\
&
=\deg_K(\Div_0(u-v))=\deg_K(\Div_\infty(u-v))\\
&=-\sum_{\substack{\ord_\pp(u-v)<0}} \ord_{\pp}(u-v)\deg_K(\pp)\\
&=-\sum_{\substack{\pp\in S_{\rm fin}\\\ord_\pp(u-v)<0}} \ord_{\pp}(u-v)\deg_K(\pp)\\&-\sum_{\substack{\pp\in S_\infty\\\ord_\pp(u-v)<0}} \ord_{\pp}(u-v)\deg_K(\pp).
\end{aligned}
\end{equation}
By Lemma \ref{rosen1} we have 
\begin{equation}\label{eqlemC2b}
\deg_K(\Div_0(i_{K/F}(q)))=\frac{[K:F]}{[C_K:\F_p]}\deg(q)>Dh_Ks_{\rm f}
\end{equation}
where the last inequality comes from our choice of $q$. From \eqref{eqlemC2a} and \eqref{eqlemC2b}, we obtain
$$
\begin{aligned}
-\sum_{\substack{\pp\in S_\infty\\\ord_{\pp}(u-v)<0}} \ord_{\pp}(u-v)\deg_K(\pp)&>Dh_Ks_{\rm f}+\sum_{\substack{\pp\in S_{\rm fin}\\\ord_{\pp}(u-v)<0}} \ord_{\pp}(u-v)\deg_K(\pp)\\
&>Dh_Ks_{\rm f}-Dh_Ks_{\rm f}=0.
\end{aligned}
$$
This finishes the proof of Claim A. $\hfill \square$\\


\noindent\textbf{Claim B:} There exists $j_0$ such that 
$$
\ord_{\infty_i}(a)\ne \ord_{\infty_i}(bq_{j_0}), \qquad \ord_{\infty_i}(a+bq_{j_0})\le 0\qquad
 {\rm for\ all\ }\qquad i=1,\dots,s_\infty.
$$
\textit{Proof of Claim B:} Notice that for each $i$, we have 
$$
\ord_{\infty_i}(a+bq_j-a-bq_{\ell}) 
=\ord_{\infty_i}(b(q_j-q_\ell)) = \ord_{\infty_i}(b)+\ord_{\infty_i}(q_j-q_\ell) <0.
$$ 
It follows that $\ord_{\infty_i}(a+bq_j)\leq 0$ except for at most one $j$. By the pigeonhole principle there exists a $J\subseteq\{1,\dots,2s_\infty+1\}$ of cardinality $s_\infty+1$ so that  $\ord_{\infty_i}(a+bq_{j})\leq 0$ for any  $i=1,\dots,s_\infty$ and any $j\in J$. Since $ \ord_{\infty}(q_j)\ne \ord_{\infty}(q_\ell)$ for any $j\ne \ell$, we have $\ord_{\infty_i}(a)\ne\ord_{\infty_i}(bq_{j})$ except for at most one $j\in J$, hence $\ord_{\infty_i}(a+bq_{j})=\min\{\ord_{\infty_i}(a),\ord_{\infty_i}(bq_{j})\}$ except for at most one $j\in J$. Applying the pigeonhole principle again there exists $j_0\in J$ so that 
$$
\ord_{\infty_i}(a+bq_{j_0})=\min\{\ord_{\infty_i}(a),\ord_{\infty_i}(bq_{j_0})\}
$$ 
for any $i=1,\dots,s_\infty$. 
This finishes the proof of Claim B. $\hfill\square$

Notice that 
$$
x+q_{j_0} | \varepsilon-1 \qquad\textrm{if and only if}\qquad a+bq_{j_0} | u-v.
$$ 
thus, we have $\ord_\pp(a+bq_{j_0})\leq \ord_{\pp}(u-v)$ for all $\pp\notin S$. It follows that 

\begin{equation}\label{lemeq4}
 \sum_{\substack{\pp\notin S}}\ord_{\pp}(a+bq_{j_0})\deg_K(\pp)\leq \sum_{\substack{\pp\notin S}} \ord_{\pp}(u-v)\deg_K(\pp)
\end{equation}
 
We aim to obtain an upper bound for (\ref{lemeq4}).\\

\noindent\textbf{Claim C:} $$\sum_{\substack{\pp\notin S}}\ord_{\pp}(u-v)\deg_K(\pp)\le Dh_K s_{\rm f}-s_\infty D\ord_{\min}(u-v).$$
 
\textit{Proof of Claim C:} In order to get an upper bound, we proceed as follows.

\begin{equation}
\begin{aligned}
\sum_{\substack{\pp\notin S}} \ord_{\pp}(u-v)\deg_K(\pp)&\leq 
\sum_{\substack{\ord_\pp(u-v)>0}} \ord_{\pp}(u-v)\deg_K(\pp)\\
&=-\sum_{\substack{\ord_\pp(u-v)<0}} \ord_{\pp}(u-v)\deg_K(\pp)\\
&=-\sum_{\substack{\pp\in S_{\rm fin}\\ \ord_\pp(u-v)<0}}\ord_{\pp}(u-v)\deg_K(\pp) +\\&-\sum_{\substack{\pp\in S_\infty\\ \ord_\pp(u-v)<0}}\ord_{\pp}(u-v)\deg_K(\pp)\\
&\leq Dh_K s_{\rm f}-s_\infty D\ord_{\min}(u-v).
\end{aligned}
\end{equation}
where the last inequality holds as $\ord_{\min}(u-v)<0$.$\hfill\square$\\

Now, we aim to obtain a lower bound for (\ref{lemeq4}).\\

\noindent\textbf{Claim D:}
$$
-s_\infty D\min\{\ord_{\min}(a),\ord_{\min}(b)\}-Dh_K s_{\rm f}\leq \sum_{\substack{\pp\notin S}}\ord_{\pp}(a+bq_{j_0})\deg_K(\pp).
$$

\textit{Proof of Claim D:}
Recall that by hypothesis $\ord_{\pp}(x)\ne 0$ for all $\pp\in S_{\rm fin}$. It follows, from our choice of $a,b$ and Claim B that $\ord_\pp(a+bq_{j_0})\leq 0 $ for all $\pp\in S_{\rm fin}$ and $\ord_{\infty_i}(a+bq_{j_0})\le 0 $ for $i=1,\dots,s_\infty$. Hence,

$$
\sum_{\substack{\pp\notin S}} \ord_{\pp}(a+bq_{j_0})\deg_K(\pp)=
\sum_{\substack{\ord_\pp(a+bq_{j_0})>0}} \ord_{\pp}(a+bq_{j_0})\deg_K(\pp)$$
$$
=- \sum_{i=1}^{s_\infty}\ord_{\infty_i}(a+bq_{j_0})\deg_K(\infty_i)
-\sum_{\substack{\pp\in S_{\rm fin}}} \ord_{\pp}(a+bq_{j_0})\deg_K(\pp)
$$
$$
\geq-\ord_{\min}(a+bq_{j_0})=-
\min\{\ord_{\min}(a),\ord_{\min}(bq_{j_0})\}
$$
$$
\geq -\min\{\ord_{\min}(a),\ord_{\min}(b)\}-[K:F]\deg(q).\ \ \ \ \ \ \ \square
$$
Finally, from Claim C and D we obtain  
$$
-\min\{\ord_{\min}(a),\ord_{\min}(b)\}+[K:F]\leq Dh_ks_{\rm f}-s_\infty D\ord_{\rm min}(u-v).
$$ 
Setting $C_2=Dh_Ks_{\rm f}-[K:F]$, we obtain 

$$
-\min\{\ord_{\min}(a),\ord_{\min}(b)\} \leq -s_\infty D \ord_{\rm min}(u-v)+C_2.
$$
This finishes the proof of the Lemma. \endproof

We are ready to prove the main theorem of the section. The proof follows the same pattern as before, though much harder. We first introduce some notation (we redefine the letter $I$ that was used before).

\begin{itemize}
\item $I=\{1,\dots, 3r+1\}^k$.
\item $J=\{1,\dots,2r+1\}^k.$
\item Let $r_i$ with $i=1,\dots, s_\infty+1$ be a finite sequence of irreducible polynomials in $\F_p[t],$ not below any prime in $S_{\rm fin}$, with pairwise distinct degrees. 
\item 
Let $s_j$, with $j=1,\dots, s_\infty+1$ be a finite sequence of irreducible polynomials in $\F_p[t]$ not below any prime of $S_{\rm fin}$ and such  that $\ord_\infty( q_j-q_\ell)< -(N_1+1)C_1$, and $ \ord_{\infty}(q_j)\ne \ord_{\infty}(q_{j'})$ for any $j\ne j'$.
\end{itemize}
Choose $N_0$ to be any positive integer such that 
$$
-N_0n>
3 s  D h_K-s s_{\infty} D^2(4n+(-C_3+s D\omega  )
$$
holds for any integer $n\le -1, $ where 
$$
C_3=\max\{\ord_\pp(P^\beta s_{j})\colon i,j=1,\dots, 2s_\infty+1,  \beta \in J , \pp\in S_{\rm fin}\}.
$$
Let $N=\log_2(N_0)+1,$ and $N_1=2^{N-1}$.
\begin{theorem}
	The set 
	$$
	SQ=\{(x,y)\in \O_{K,S}\colon y=x^2\}
	$$
	is positive-existentially definable in the structure  $\mathcal M_S.$
\end{theorem}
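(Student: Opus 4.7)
The plan is to prove $y = x^2$ positive-existentially via the Lipshitz--Cerda-Romero--Mart\'inez-Ranero divisibility trick: for a unit $u$ of infinite order sufficiently large compared with $x$ and $y$, the condition $(x+u)\mid(y-u^2)$ is equivalent to $(x+u)\mid(y-x^2)$, since
\[
y - u^2 = (y-x^2) + (x-u)(x+u),
\]
and once the divisor-degree of $x+u$ outside $S$ dominates that of $y-x^2$, divisibility forces $y-x^2=0$. By Proposition \ref{squareunits} the squaring of units is already positive-existentially definable, so iterating it $N-1$ times gives positive-existential access to $u=\varepsilon^{N_1}$ and $w=u^2$ starting from a base unit $\varepsilon$.

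Concretely I would let $\Sq(x,y)$ assert the existence of $\varepsilon,u,w\in\O_{K,S}^\times$, together with the witnessing chain of squares $\varepsilon,\varepsilon^2,\dots,\varepsilon^{N_1}=u,w=u^2$ all joined by $\Sq_u$, such that:
\begin{enumerate}
\item $\varphi_\infty(\varepsilon)$ holds, i.e.\ $\varepsilon$ is a unit of infinite order;
\item for every $\beta\in J$ and every $j=1,\dots,s_\infty+1$ one has $(x+P^\beta s_j)\mid(\varepsilon-1)$, and also $q\mid(\varepsilon-1)$, matching the hypotheses of Lemma \ref{lemC2};
\item the key divisibility $(x+u)\mid(y-w)$ holds.
\end{enumerate}
For the forward direction, given $y=x^2$, Lemma \ref{lemunitinforder} applied to the product $q\cdot\prod_{\beta,j}(x+P^\beta s_j)$ produces the required $\varepsilon$; iterated $\Sq_u$ supplies $u=\varepsilon^{N_1}$ and $w=u^2$, and $(x+u)\mid(y-w)$ holds because $y-w=(x-u)(x+u)$.

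The reverse direction is the main obstacle. Assume $\Sq(x,y)$ holds. Following the pattern of Proposition \ref{squareunits}, I would use the pigeonhole principle over the enlarged multi-index set $I$ and over the $c_K$ constant units to select a consistent pair $(\beta,j_0)$ aligning valuations at the primes of $S$. Writing $x=a/b$ and $\varepsilon=\tilde u/\tilde v$ via Lemma \ref{keylemma}, and translating the largeness divisibilities from item (ii) into the framework of Lemma \ref{lemC2}, we obtain
\[
-\min\{\ord_{\min}(a),\ord_{\min}(b)\}\le -s_\infty D\,\ord_{\min}(\tilde u-\tilde v)+C_2,
\]
which quantifies how much larger $\varepsilon$ is than $x$ at the infinite places. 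Raising to the $N_1$-th power amplifies this domination by the factor $N_1=2^{N-1}$, which by the defining inequality of $N_0$ is arranged to be enough so that the divisor-degree of $x+u$ outside $S$ strictly exceeds that of $y-x^2$. Applying the global identity $\deg_K\Div_0(f)=\deg_K\Div_\infty(f)$ to $f=(y-x^2)/(x+u)\in K$ then yields a contradiction unless $y-x^2=0$.

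The hardest part is this final estimate. One must track valuations simultaneously at every place of $S$: the finite primes contribute at worst the bounded quantity $C_3$ (which is why $C_3$ is the specific maximum built from $P^\beta s_j$ over $\beta\in J$), while the infinite places produce the linear-in-$\ord_{\min}(\tilde u-\tilde v)$ term from Lemma \ref{lemC2}. The constants $C_3,N_0,N,N_1$ were engineered precisely so that, after pigeonhole and $N-1$ iterated squarings, the amplified size of $u$ at the infinite places strictly dominates every error term coming from $x$, $y$, and the bounded contribution at $S_{\rm fin}$, closing the degree inequality and forcing $y=x^2$.
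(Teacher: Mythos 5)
Your high-level strategy is exactly the one the paper follows: build an exponentially large unit $u=\varepsilon^{N_1}$ by iterating $\Sq_u$ along a chain, use the ``largeness'' divisibilities from Lemma \ref{lemC2} to dominate $x$, and then force $y-x^2=0$ by comparing the divisor degree of the factor $x+u$ with that of $y-x^2$. The structure of the argument (Claims A--D of the paper, the interplay between Lemmas \ref{keylemma} and \ref{lemC2}, the role of $N_0,N,N_1$) is precisely the one you outline, and your remark that the conclusion comes from the identity $\deg_K\Div_0=\deg_K\Div_\infty$ applied to the quotient is a clean way to see it.

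However, the explicit formula you propose has a genuine gap: your key clause is the single divisibility $(x+u)\mid(y-w)$ with no free parameters, whereas the paper's $\psi_5$ reads
$$P^{\alpha}x+(r_i\varepsilon_{N}+P^\beta s_{j})\ \big|\ P^{2\alpha}y-(r_i\varepsilon_{N}+P^\beta s_{j})^2,$$
ranging over all $\alpha\in I$, $\beta\in J$, $i$ and $j$. These indices are not decorative. To produce a useful lower bound for $\sum_{\pp\notin S}\ord_\pp(T)\deg_K(\pp)$, where $T$ is the numerator of the ``large'' factor, one needs to guarantee that no unwanted cancellation occurs at any place of $S$: Claim B1 (pigeonhole over $\beta$) separates valuations at $S_{\rm fin}$, Claim B2 (pigeonhole over $i$) separates the leading terms $av^{N_1}$ and $r_ibu^{N_1}$ at $S_\infty$, and Claim B3 (pigeonhole over $j$) forces $\ord_\pp(T)\le 0$ at $S_\infty$. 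Likewise, Claim A (pigeonhole over $\alpha$) arranges $\ord_\pp(P^\alpha x)\ne 0$ for $\pp\in S_{\rm fin}$, which is a hypothesis of Lemma \ref{lemC2} that your formula simply does not secure. With only $(x+u)\mid(y-w)$ there is no pigeonhole to run: if $a\tilde v^{N_1}$ and $b\tilde u^{N_1}$ share a valuation at some $\pp\in S$, the numerator $a\tilde v^{N_1}+b\tilde u^{N_1}$ can have arbitrarily large positive order there, which destroys the degree estimate and lets the final inequality fail. Your prose mentions ``pigeonhole over the enlarged multi-index set $I$'', but the formula you wrote gives you nothing to pigeonhole over. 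You would need to enlarge item (3) to a big conjunction over $\alpha\in I$, $\beta\in J$, $i$, $j$, with the perturbed second summand $r_i\varepsilon_N+P^\beta s_j$ inserted, for the argument to close.
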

\proof We claim that the following formula $\varphi_{\rm sq}(x,y)$ defines the set $SQ$.
\begin{equation}
(x=0\wedge y=0)\vee \bigvee_{\substack{\alpha\in I\\ j=1,\dots,4s_\infty+1\\ P^\alpha\in\O_{K,S}^\times}}(x=- P^{-\alpha}q_j\wedge y=P^{-2\alpha}q_j^2)
\end{equation}

\begin{equation}
\vee  \exists \varepsilon_1(\varepsilon_1 \dib 1 \wedge \varphi_\infty(\varepsilon_1)) \wedge \psi(x,y)
\end{equation}

where $\psi(x,y)$ is the conjunction of the following formulas:
$$
\psi_1(x,\varepsilon_1): \bigwedge_{\substack{\alpha\in I\\j=1,\dots,2s_\infty+1}} P^{\alpha}x+q_j\dib \varepsilon_1-1
$$

$$
\psi_2(y,\varepsilon_1): \bigwedge_{\substack{\alpha\in I\\j=1,\dots,2s_\infty+1}} P^{2\alpha}y+q^2_j\dib \varepsilon_1-1
$$

$$
\psi_3(\varepsilon_1): q \dib \varepsilon_1-1
$$

$$
\psi_4(\varepsilon_1): \exists \varepsilon_2\dots\varepsilon_N\left(\bigwedge_{i=1}^{N-1} \varepsilon^2_i=\varepsilon_{i+1}\right)
$$

$$
\psi_5(x,y,\varepsilon_N): \bigwedge_{\substack{\alpha\in I\\ \beta\in J\\i=1,\dots,s_\infty+1\\j=1,\dots,s_\infty+1}} P^{\alpha}x+(r_i\varepsilon_{N}+P^\beta s_{j})\dib P^{2\alpha}y-(r^2_i\varepsilon_{N}^2+2r_i\varepsilon_{N} P^\beta s_{j} +P^{2\beta}s_{j}^2).
$$
First we show that if $y=x^2$, then $\varphi_{\rm sq}(x,y)$ holds in $\mathcal{M}_S$. If $x=0$ or $x=-P^{-\alpha}q_j$ for $\alpha\in I$ and $j\in\{1,\dots,4s_\infty+1\}$, then clearly $\varphi_{\rm sq}(x,y)$ holds. So we may assume $x\ne 0$ and  $x\ne -P^{-\alpha}q_j$ for any $\alpha\in I$ and $j\in\{1,\dots,4s_\infty+1\}$, but in this case the result follows immediately from Lemma \ref{lemunitinforder}.
\\

Let us now assume that $\varphi_{\rm sq}(x,y)$ holds. Without loss of generality we may also assume that $x\ne0,$ and  $x\ne -P^{-\alpha}q_j$ for any $\alpha\in I$ and any $j=1,\dots,4s_\infty+1$. Fix $\varepsilon_1$  so that $\psi(x,y)$ holds. 

We will break the rest of the proof into several Claims.\\


\noindent\textbf{Claim A:} There exists $\alpha\in I$ such that $\ord_\pp(P^\alpha x)\ne 0,$ $\ord_\pp(P^{2\alpha} y)\ne 0$, $\ord_\pp(P^\alpha x)\ne \ord_\pp (\varepsilon_N)$ for all $\pp\in S_{\rm fin}$. \\
\textit{Proof of Claim A:}
Given $x,y \in \O^*_{K,S}$, for each  $1\leq\ell \leq k$, we choose 
$$
\alpha_\ell\in I\setminus \left(\left\{\frac{-\ord_\pp(x)}{e_\pp}\colon \pp \dib P_i\right\}\cup\left\{\frac{-\ord_\pp(y)}{2e_\pp}\colon \pp  \dib   P_i\right\}\cup \left\{\frac{-\ord_\pp(\varepsilon_N)}{e_\pp}\colon \pp  \dib   P_i\right\} \right),
$$
arbitrarily. $\hfill\square$

From now on, in order to simplify notation, we will write $X$ and $Y$ instead of $P^\alpha x$ and $P^{2\alpha}y$. Since $\psi_1(x,\varepsilon_1)$, $\psi_2(y,\varepsilon_1)$ and $\psi_3(\varepsilon_1)$ hold, from Lemma \ref{lemC2} we have 
\begin{equation}\label{lasteq}
\begin{split}
-\min\{ \ord_{\rm min}(a),\ord_{\rm min}(b),\ord_{\rm min}(c),\ord_{\rm min}(d)\} \\
\leq -s_{\infty}
D\min\{\ord_{\min}(u),\ord_{\min}(v)\},
\end{split}
\end{equation}
where $X=\frac{a}{b}$, $Y=\frac{c}{d}$ and $\varepsilon_1=\frac{u}{v}$ are as in Lemma \ref{keylemma}.


From $\psi_5$, for every choice of $i=1,\dots,2s_\infty+1$, $j=1,\dots,4s_\infty+1$ and $\beta\in J$, we have 
$$
X+ (r_i\varepsilon_N+P^\beta s_{j})\ |\ Y-(r_i^2\varepsilon_N^{2}+2r_i\varepsilon_N P^\beta s_{j} +P^{2\beta}s_{j}^2),
$$ 
and  
$$
X+ (r_i\varepsilon_N+P^\beta s_{j})\ | \ X^2-(r_i^2\varepsilon_N^{2}+2r_i\varepsilon_N P^\beta s_{j} +P^{2\beta}s_{j}^2)
$$ 
hence $X+ (r_i\varepsilon_N+P^\beta s_{j})\  | \ Y-X^2$. 
In order to get a contradiction, suppose $Y\ne X^2$. Write $N_1=2^{N-1}$. Since $\varepsilon_N=\varepsilon_1^{2^{N-1}}=\varepsilon_1^{N_1}$ it follows that 
$$ 
av^{N_1}+r_ibu^{N_1}+bv^{N_1}P^\beta s_{j}\ \dib \ b^2c-a^2d,
$$
hence 
\begin{equation}\label{eqtocontradict}
\ord_\pp(av^{N_1}+r_ibu^{N_1}+bv^{N_1}P^\beta s_{j})\leq \ord_\pp(b^2c-a^2d),
\end{equation}
for any $\pp \notin S$. 

The following two claims are proven like Lemma \ref{fakepigeonhole}. \\

\noindent\textbf{Claim B1:} There exists $\beta\in J$ such that $\ord_\pp(P^\beta)\ne \ord_\pp(X)$ and $\ord_\pp(P^\beta)\ne \ord_\pp(\varepsilon_1^{N_1})$ for all $\pp\in S_{\rm fin}$. 

\noindent\textit{Proof of Claim B1:}
For each  $1\leq\ell \leq k$, we choose 
$$
\beta_\ell\in J\setminus \left(\left\{\frac{\ord_\pp(X)}{e_\pp}\colon \pp \dib P_i\right\}\cup \left\{\frac{-\ord_\pp(\varepsilon_1^{N_1})}{e_\pp}\colon \pp  \dib   P_i\right\} \right),
$$
arbitrarily. $\hfill\square$\\

We now fix $\beta$ so that the above holds. \\

\noindent\textbf{Claim B2:} There exists $i_0\in\{1,\dots,s_\infty+1\}$  such that 
$\ord_{\pp}(av^{N_1})\ne  \ord_{\pp}(r_{i_0}bu^{N_1})$ for every given $\pp\in S_\infty$.

\noindent\textit{Proof of Claim B2:}
We choose $i_0$ such that 
$$
 \ord_{\infty}(r_i)\in\{\ord_{\infty}(r_i)\colon i=1,\dots,s_\infty+1\}\setminus \left(\left\{\frac{\ord_\pp(av^{N_1}-\ord_{\pp}(bu^{N_1}))}{e_\pp}\colon \pp \dib \infty\right\} \right),
$$
arbitrarily. $\hfill\square$\\

We now fix $i_0$ so that the above holds. \\

\noindent\textbf{Claim B3:} There exists $j_0\in\{1,\dots,s_\infty+1\}$ such that
$$
\ord_{\pp}(av^{N_1})\ne\ord_{\pp}(bv^{N_1}P^\beta s_{j_0})\ {\rm and} \ 
\ord_{\pp}(r_{i_0}bu^{N_1})\ne \ord_{\pp}(bv^{N_1}P^\beta s_{j_0})
$$
\begin{center}
	and 
\end{center}

 $$\ord_\pp(av^{N_1}+r_{i_0}bu^{N_1}+bv^{N_1}P^\beta s_{j_0})\leq 0\ \ \  {\rm for\ all}\ \  \ \pp\in S_\infty.$$
\noindent\textit{Proof of Claim B3:}  Notice that for each $i=1,\dots,s_\infty$, we have 
$$
\ord_\pp(av^{N_1}+r_{i_0}bu^{N_1}+bv^{N_1}P^\beta s_{j}-av^{N_1}-r_{i_0}bu^{N_1}-bv^{N_1}P^\beta s_{j'})
=\ord_{\infty_i}(bv^{N_1}P^\beta(s_j-s_{j'}))$$
$$
 = \ord_{\infty_i}(bv^{N_1}P^\beta)+\ord_{\infty_i}(q_j-q_\ell) <0.
$$ 
It follows that $\ord_\pp(av^{N_1}+r_{i_0}bu^{N_1}+bv^{N_1}P^\beta s_{j})\leq 0$ except for at most one $j$. It follows that  there exists a $j_0\in \{1,\dots,s_\infty+1\}$  so that  $\ord_\pp(av^{N_1}+r_{i_0}bu^{N_1}+bv^{N_1}P^\beta s_{j_0})\leq 0$ for any  $i=1,\dots,s_\infty.$ This finishes the proof of Claim B3. $\hfill\square$\\

We will obtain our contradiction by showing that Equation \eqref{eqtocontradict} does not hold for this choice of $i_0$ and $j_0$.

To ease the reading, let us write $\rho=r_{i_0}$, $\sigma=P^\beta s_{j_0}$ and 
$$
T=av^{N_1}+\rho bu^{N_1}+bv^{N_1}\sigma.
$$ 

We shall obtain an upper bound for $\sum_{\pp\notin S} \ord_\pp(b^2c-a^2d)\deg_K(\pp).$ In order to do this, note that

\begin{equation*}
\begin{split}
\sum_{\pp\notin S} \ord_\pp(b^2c-a^2d)\deg_K(\pp) & \leq \\
\sum_{\substack{\ord_\pp(b^2c-a^2d)>0}} \ord_\pp(b^2c-a^2d)\deg_K(\pp) & = \\
\sum_{\substack{\ord_\pp(b^2c-a^2d)<0}} -\ord_\pp(b^2c-a^2d)\deg_K(\pp) & =
\end{split}
\end{equation*}

\begin{equation*}
\begin{split}
-\sum_{\substack{\ord_\pp(b^2c-a^2d)<0\\ \pp\in S_{\rm fin}}} \ord_\pp(b^2c-a^2d)\deg_K(\pp)-\sum_{\substack{\ord_\pp(b^2c-a^2d)<0\\ \pp\in S_{\infty}}} \ord_\pp(b^2c-a^2d)\deg_K(\pp)\\
\leq 3sD\left [h_K-\min\{\ord_{\min}(a),\ord_{\min}(b),\ord_{\min}(c),\ord_{\min}(d)\} \right ]
\end{split}
\end{equation*}
where the last inequality uses Lemma \ref{keylemma} to get the bound for the sum over the primes in $S_{\rm fin}$, and the strong triangular inequality for the sum over the primes in $S_\infty$. So we have from Equation \eqref{eqtocontradict}
\begin{multline}\label{equpperbound}
\sum_{\pp\notin S} \ord_\pp(T)\deg_K(\pp)\le\\
3 s  D \left [h_K-\min\{\ord_{\min}(a),\ord_{\min}(b),\ord_{\min}(c),\ord_{\min}(d)\} \right ].
\end{multline}


Now we shall obtain a lower bound for $\sum_{\pp\notin S} \ord_\pp(T)\deg_K(\pp)$. The lower bound is much harder to get, and it is where all the assumptions about the different parameters come into play. Observe that 

\begin{equation*}
\begin{aligned}
\sum_{\pp\notin S} \ord_\pp(T)\deg_K(\pp)  &= 
\sum_{\substack{\ord_\pp(T)>0}} \ord_\pp(T)\deg_K(\pp)  - 
\sum_{\substack{\ord_\pp(T)>0\\ \pp\in S}} \ord_\pp(T)\deg_K(\pp) \\
&= 
\sum_{\substack{\ord_\pp(T)>0}} \ord_\pp(T)\deg_K(\pp)  - 
\sum_{\substack{\ord_\pp(T)>0\\ \pp\in S_{\rm fin}}} \ord_\pp(T)\deg_K(\pp) \\
& = -\sum_{\substack{\ord_\pp(T)<0}} \ord_\pp(T)\deg_K(\pp)  
-\sum_{\substack{\ord_\pp(T)>0\\ \pp\in S_{\rm fin}}} \ord_\pp(T)\deg_K(\pp),
\end{aligned}
\end{equation*}
where the second equality comes from our hypothesis on $T$. We need a last claim. \\

\noindent\textbf{Claim C:} For each $\pp\in S_{\rm fin}$, if $\ord_\pp(T)> 0$, then 
$$
\ord_\pp(T)
\leq \max\{\ord_\pp(\sigma),\ord_\pp(b)\}\leq C_3-s D\ord_{\min}(b).
$$

\noindent\textit{Proof of Claim C:} Recall that by our choice of  $\rho$ and $\sigma$ we have that  
$$
0<\ord_\pp(T)=\min\{\ord_\pp(av^{N_1}),\ord_\pp(\rho bu^{N_1}),\ord_\pp(bv^{N_1}\sigma)\}.
$$  
We proceed by cases:

\begin{enumerate}
	\item[{\bf Case 1}]: If $\ord_\pp(a)>0,$ then $\ord_\pp(b)=0$ by our choice of $a$ and $b$. Thus, $\ord_\pp(u)>0$ and $\ord_\pp(v)=0$. It follows that $\ord_\pp(T)\leq \ord_\pp(\sigma)$. 
	\item[{\bf Case 2}]: If $\ord_\pp(a)\leq 0$, then $\ord_\pp(v)>0$. This implies $\ord_\pp(u)\leq 0$ (by our choice of $u$ and $v$), which in turns implies that $\ord_\pp(T)\leq \ord_\pp(b)$. 
	
	This proves the claim. $\hfill\square$
\end{enumerate} 
Finally, we obtain  
$$
-\sum_{\substack{\ord_\pp(T)<0}} \ord_\pp(T)\deg_K(\pp)
-\sum_{\substack{\ord_\pp(T)>0\\ \pp\in S_{\rm fin}}} \ord_\pp(T)\deg_K(\pp)
$$
$$
\geq -N_1\min\{\ord_{\min}(v),\ord_{\min}(u)\}-s D\max\{-\ord_{\min}(a),-\ord_{\min}(b)\}
$$
$$ 
-s D(C_3-s D\ord_{\min}(b)).
$$
$$
\geq -N_1\min\{\ord_{\min}(v),\ord_{\min}(u)\}+s D\min\{\ord_{\min}(a),\ord_{\min}(b)\}
$$
$$ 
+s D(-C_3+s D\ord_{\min}(b)).
$$

So, together with Equation \eqref{equpperbound}, we have
\begin{multline}\notag
-N_1\min\{\ord_{\min}(v),\ord_{\min}(u)\}-\\
s D\min\{\ord_{\min}(a),\ord_{\min}(b)\}-\\
s D(-C_3+s D\ord_{\min}(b) ) \le
\end{multline}
$$
3 s  D \left [h_K-\min\{\ord_{\min}(a),\ord_{\min}(b),\ord_{\min}(c),\ord_{\min}(d)\} \right ],
$$
hence
\begin{multline}\notag
-N_1\min\{\ord_{\min}(v),\ord_{\min}(u)\}\le\\
3 s  D \left [h_K-\min\{\ord_{\min}(a),\ord_{\min}(b),\ord_{\min}(c),\ord_{\min}(d)\} \right ]\\
-s D\min\{\ord_{\min}(a),\ord_{\min}(b)\}-\\
s D(-C_3+s D_{\max}\ord_{\min}(b) ),
\end{multline}
hence by equation \eqref{lasteq}, we have
\begin{multline}\notag
-N_1\min\{\ord_{\min}(v),\ord_{\min}(u)\}\le\\
3 s  D \left [h_K-s_{\infty}
D\min\{\ord_{\min}(u),\ord_{\min}(v)\} \right ]\\
-ss_{\infty}
D^2\min\{\ord_{\min}(u),\ord_{\min}(v)\}-\\
ss_{\infty} D^2(-C_3+s D\min\{\ord_{\min}(u),\ord_{\min}(v)\} ),
\end{multline}
hence 
$$
-N_1M_{u,v}\le
3 s  D h_K-3 s s_{\infty} D^2M_{u,v}
-ss_{\infty}
D^2M_{u,v}-ss_{\infty} D^2(-C_3+s DM_{u,v} )
$$
$$
-N_1M_{u,v}\le
3 s D h_K-s s_{\infty} D^2(4M_{u,v}+(-C_3+s DM_{u,v} )
$$


By our choice of $N_0$ and $N$, the last inequality does not hold. This finishes the proof of the Theorem.
\endproof

As a corollary we obtain the undecidability of $\rm{Th}^{+\exists }(\mathcal M_S)$.
\begin{corollary}
	The structure $\mathcal M_S$ is undecidable.
\end{corollary}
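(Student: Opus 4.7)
The plan is to deduce the corollary from the preceding theorem essentially for free, by observing that positive-existential definability of squaring on $\mathcal{O}_{K,S}$ immediately yields positive-existential definability of multiplication, and then invoking known undecidability of Hilbert's Tenth Problem over $\mathcal{O}_{K,S}$ in the language of rings.

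First, I would recall that we are assuming $K$ has odd characteristic, so $2$ (and hence $4$) is a nonzero constant of $K$ and therefore a unit of $\mathcal{O}_{K,S}$. Write $\tfrac{1}{2}$ for its inverse in $\mathcal{O}_{K,S}$, which is a concrete element of $\mathbb{F}_p \subseteq \mathcal{O}_{K,S}$. By the previous theorem, the graph $\mathrm{SQ} = \{(x,y) : y = x^2\}$ is positive-existentially $\mathcal{L}_{\Div,t}$-definable in $\mathcal{M}_S$; let $\varphi_{\rm sq}(x,y)$ be the defining formula. Then the polarization identity
\[
2 x y = (x+y)^2 - x^2 - y^2
\]
(valid in any commutative ring and nontrivial since $2$ is invertible in odd characteristic) gives the positive-existential definition of the graph of multiplication
\[
\mathrm{MULT}(x,y,z) \;:\;\; \exists u,v,w\bigl(\varphi_{\rm sq}(x+y,u)\wedge\varphi_{\rm sq}(x,v)\wedge\varphi_{\rm sq}(y,w)\wedge u = v+w+2z\bigr),
\]
which is an $\mathcal{L}_{\Div,t}$-formula. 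This establishes the first assertion of Theorem~\ref{mainund}, namely that multiplication is positive-existentially definable in $\mathcal{M}_S$.

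Once multiplication is positive-existentially definable, every positive-existential formula in the full ring language $\{0,1,+,\cdot,=\}$ (and in fact in $\mathcal{L}_{\rm ring}\cup\{\cdot t\}$) translates into a positive-existential $\mathcal{L}_{\Div,t}$-formula over $\mathcal{M}_S$. Thus if the positive-existential theory of $\mathcal{M}_S$ were decidable, so would be the positive-existential theory of $\mathcal{O}_{K,S}$ in the language of rings. However, for $K$ a finite separable extension of $\mathbb{F}_p(t)$ with $p$ odd and $S$ a finite nonempty set of places containing the primes at infinity, Hilbert's Tenth Problem for $\mathcal{O}_{K,S}$ is known to be unsolvable (by the work of Pheidas, Videla, Shlapentokh, and others cited in the introduction). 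This contradiction yields the undecidability of $\mathrm{Th}^{+\exists}(\mathcal{M}_S)$, proving the corollary.

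I do not expect any real obstacle here: all the hard work is in the preceding theorem, and the only points to be careful about are (i) that $2$ is a unit of $\mathcal{O}_{K,S}$ in odd characteristic, which is immediate, and (ii) citing the correct undecidability result for the ring $\mathcal{O}_{K,S}$ in the language of rings, which is standard for global function fields of odd characteristic.
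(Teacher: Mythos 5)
Your proposal is correct and follows the same route as the paper's proof, which merely says the corollary ``follows immediately from the undecidability of $\O_{K,S}$ in the language of rings''; you have simply made explicit the intermediate step (the polarization identity $2xy=(x+y)^2-x^2-y^2$, using that $2$ is a unit in odd characteristic) that the paper leaves unstated. Nothing is missing or different in substance.
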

\proof It follows immediatly from the undecidability of $\O_{K,S}$ in the language of rings (see \cite{S07}). \endproof




%
%
%
%
%
%
%

\addcontentsline{toc}{section}{References}

\noindent Carlos A. Mart\'inez-Ranero\\
Email: cmartinezr@udec.cl\\
Homepage: www2.udec.cl/~cmartinezr\\
\noindent Javier Utreras\\
Email: javierutreras@udec.cl \hspace{10pt} javutreras@gmail.com\\

\noindent Xavier Vidaux\\
Email: xvidaux@udec.cl \hspace{10pt} vidauxx@yahoo.fr \\

\noindent Same address: \\
Universidad de Concepci\'on, Concepci\'on, Chile\\
Facultad de Ciencias F\'isicas y Matem\'aticas\\
Departamento de Matem\'atica\\

\end{document}